 \def\cocoa{{\hbox{\rm C\kern-.13em o\kern-.07em C\kern-.13em o\kern-.15em A}}}
\newtheorem{theorem}{Theorem}[section]
\newtheorem{lemma}[theorem]{Lemma}
\newtheorem{proposition}[theorem]{Proposition}
\newtheorem{corollary}[theorem]{Corollary}
\theoremstyle{definition}
\newtheorem{remark}[theorem]{Remark}
\newtheorem{definition}[theorem]{Definition}
\newtheorem{notation}[theorem]{Notation}
\newcommand {\Hom}{\mathrm{Hom}}
\newcommand {\Aut}{\mathrm{Aut}}
\newcommand {\Closure}{\mathrm{Closure}}
\newcommand {\ext}{\mathrm{ext}}
\newcommand {\Hilb}{\mathcal{H}\kern -0.25ex{\mathit ilb\/}}
\newcommand{\Pic}{\operatorname{Pic}}
\newcommand{\Num}{\operatorname{Num}}
\newcommand{\Div}{\operatorname{Div}}
\newcommand{\h}{\operatorname{h}}
\newcommand{\Ho}{\operatorname{H}}
\newcommand{\rank}{\operatorname{rank}}
\begin{document}

\author[Laura Costa]{Laura Costa}
\address{Department de matem\`{a}tiques i Inform\`{a}tica, Universitat de Barcelona, Gran Via de les Corts Catalanes 585, 08007 Barcelona,
Spain}
\email{costa@ub.edu}

\author[Irene Macías Tarrío]{Irene Macias Tarrio}
\address{Department de matem\`{a}tiques i Inform\`{a}tica, Universitat de Barcelona, Gran Via de les Corts Catalanes 585, 08007 Barcelona,
Spain}
\email{irene.macias@ub.edu}

\title[Moduli spaces on ruled 3-folds]{Moduli spaces of stable bundles on ruled 3-folds and Brill-Noether problems}

\begin{abstract}
In this paper, we redefine the theory of walls and chambers due to Qin developing a new tool to study moduli spaces of stable rank 2 vector bundles on algebraic varieties of higher dimension. We apply it to describe components of some moduli spaces of rank 2 stable bundles on ruled 3-folds as well as to prove that some Brill-Noether loci are non-empty.  
\end{abstract}
\thanks{Acknowledgements:   The authors  are partially   supported
by PID2020-113674GB-I00.}

\subjclass[2020]{MSC2020 code 14J60, 14J30}
\keywords{stable vector bundles, moduli spaces, walls and chambers, Brill-Noether}

\maketitle

\tableofcontents

\markboth{}{}

\section{Introduction}
The study of moduli spaces of stable vector bundles on algebraic varieties has long been a central theme in algebraic geometry, with profound connections to differential geometry, representation theory, and mathematical physics. Given $X$ a smooth projective irreducible variety of dimension $n\geq 1$ defined over an algebraically closed field $K$ of characteristic $0$ and given $H$ an ample divisor on $X$, we denote by $M_{X,H}(r;c_1, \dots, c_{\min \{r,n \}}) $ the moduli space of rank  $r$, $H$-stable  vector bundles $E$ on $X$ with Chern classes $c_i(E)=c_i$. These moduli spaces were constructed in the seventies by Maruyama. Since then,  many detailed and interesting results
have been proved regarding these moduli spaces when the underlying variety is a curve or 
a surface and very little is known if the underlying variety has dimension greater or equal
than three. Recent developments have underscored the rich and intricate structure of moduli spaces on higher-dimensional varieties. The transition from surfaces to 3-folds introduces a host of new geometric and analytical challenges, but also opens the door to deeper insights and applications. In addition, the study of moduli spaces of stable bundles on 3-folds is valuable since it advances our understanding of higher-dimensional algebraic geometry.

In contrast with moduli spaces of stable vector bundles on surfaces, on threefolds, moduli spaces are often non-reduced,  non-smooth and in general there are no general results about these moduli spaces concerning the number of connected components, dimension, smoothness, rationality,
topological invariants, etc... , making it harder to describe them geometrically. In fact, increasing the dimension of the base variety makes everything harder and to have explicit constructions is much more difficult. In addition,  they have intricate wall-crossing behavior (how the space changes as the stability condition varies), but studying these changes could help us understand birational geometry, deformation theory, and how different moduli spaces are connected. 
 Moduli spaces of stable bundles encapsulate deep geometric information about the underlying variety, understanding their structure is therefore crucial not only for the intrinsic study of vector bundles, but also for advancing broader programs. 

This paper aims to contribute to the study of moduli spaces of stable bundles on certain 3-folds. On one hand highlighting their dependence on the ample divisor (wall crossing theory) and on the other hand describing some Brill-Noether loci inside them, providing new results on Brill-Noether theory in higher dimension. Brill-Noether theory was introduced in the 19th century in the context of line bundles on curves. Later on, was generalized to rank $r$ stable vector bundles on curves and more recently, the theory has been stated for rank $r$ stable bundles on algebraic varieties of any dimension (see for instance \cite{Brill-Noether} and \cite{higherBN}). In spite that the existence of the Brill Noether locus $W_{X,H}(r;c_1, \dots, c_{\min \{r,n \}}) \subset M_{X,H}(r;c_1, \dots, c_{\min \{r,n \}}) $ that parametrizes $H$-stable rank $r$ vector bundles $E$ with at least $k$ independent sections is seated, very few examples are known when dealing with bundles on varieties of dimension equal o greater than 2 and there are a lot of open questions concerning these Brill-Noether loci. In particular, in general, we do not know when they are non-empty. 

In the first part of the paper, we develop a new theory of walls and chambers on varieties of arbitrary dimension, which is based on the classical theory introduced by Qin in \cite{Qin1}. This new approach allows us, in the second part of the paper, to obtain our main results concerning moduli spaces of stable vector bundles on 3-folds. More precisely, we will consider $\pi:X=\mathbb{P}(\mathcal{E})\rightarrow \mathbb{P}^2$ a ruled 3-fold defined by a rank two vector bundle $\mathcal{E}$ on $ \mathbb{P}^2$. In the literature, there are several works concerning moduli spaces of rank two stable bundles on $ \mathbb{P}^3$ (see for instance \cite{Hartshorne1978}, \cite{Vitter2003} and \cite{Coanda}) but there are scatered results when the underlying variety has reacher geometry in terms, for instance, of its ample cone $\mathcal{C}_X$.

Applying the theory of walls and chambers we will contribute to the better understanding of the geometry of several moduli spaces. Since we are also interested on Brill-Noether problems, we have focused the attention on constructions that give us stable bundles with sections. In \cite{trabajo_futuro} we will apply this theory to describe other moduli spaces. More precisely, the paper is organized as follows.  The first section is devoted to develop a new theory of walls and chambers that will be a key tool to describe, later on, different moduli spaces. Stability becomes more subtle when dealing with vector bundles on higher dimensional varieties, hence the introduction of the theory of walls and chambers helps in this direction and also allows to describe them geometrically. In the second section, we describe moduli spaces of stable vector bundles on ruled 3-folds. We will prove that some of them are empty (see Theorem \ref{th_vacio1}) and, in other cases, we will  describe and compute the dimension of one of its irreducible components (see Theorem \ref{th_irreducible_comp}). Moreover, we will see how these moduli spaces changes as the stability condition varies (see Theorem \ref{prop_decom}).  Finally, we will also  prove that some of them contain non-empty Brill-Noether loci (see Theorem \ref{BN}). 

\begin{notation}
    We will work over $K$ an algebraically closed field of characteristic $0$.   
    Given $X$ an algebraic variety and $\mathcal{F}$ a vector bundle on $X$, if there is no confusion, we will write $\Ho^{i}(\mathcal{F})$ to denote the $i$-th cohomology group $\Ho^{i}(X,\mathcal{F})$ and given a divisor $D$ on $X$, $\Ho^{i}\mathcal{O}_X(D)$ instead of $\Ho^{i}(X,\mathcal{O}_X(D))$.
\end{notation}

\section{Walls and chambers}
Inspired by the theory of walls and chambers introduced by Qin (see \cite{Qin1}),  in this section we develop a slightly different theory that will allow us to describe certain moduli spaces of stable bundles on varieties of dimension $n\geq3$. Later we will apply it in dimension $3$ and, in particular, it will 
anable us study some Brill-Noether loci on higher dimension.

Throught this section, $X$ will be a smooth projective variety of dimension $n$. Recall that a polarization is an element $L\in \Num(X)$ that is ample and the Kähler cone $\mathcal{C}_X$ of $X$ is an open cone in $\Num(X)\otimes \mathbb{R}$ spaned by polarizations.
\begin{definition}
    Let $X$ be a smooth projective variety of dimension $n$. Let $\xi\in\Num(X)\otimes\mathbb{R}$ and $c_i\in\Ho^{2i}(X,\mathbb{Z})$, for $i=1,2$. 
    We define $$W^\xi:=\{x\in\Num(X)\otimes \mathbb{R}|\thinspace \xi\cdot x^{n-1}=0\}\cap \mathcal{C}_X.$$
We define $\mathcal{W}(c_1,c_2)$ to be the set of elements $W^\xi$ where $\xi$ is the numerical equivalence class of a divisor $D$ on $X$ such that
 $\mathcal{O}_X(D+c_1)$ is divisible by $2$ on $\Pic(X)$ and
  there exists a codimension 2 locally complete intersection 
      $Z$ on $X$ such that $$[Z]=c_2+\frac{D^2-{c_1}^2}{4}.$$


A wall of type $(c_1,c_2)$ is an element of $\mathcal{W}(c_1,c_2)$.
A chamber $\mathcal{C}$ is a connected component of $\mathcal{C}_X\backslash\mathcal{W}(c_1,c_2)$ and a $\mathbb{Z}$-chamber is the intersection $\mathcal{C}\cap \Num(X)$. We define a face as $\mathcal{F}:=\Closure(\mathcal{C})\cap W^{\xi}$. 

\end{definition}

\begin{definition}
    Let $\xi\in\Num(X)\otimes\mathbb{R}$ defining a wall $W^{\xi}$ of type $(c_1,c_2)$. We say that $W^{\xi}$ separates two polarizations (respectively two chambers) $L_1$ and $L_2$ (respectively $\mathcal{C}_1$ and $\mathcal{C}_2$),  if $\xi\cdot{L_1}^{n-1}<0< \xi\cdot{L_2}^{n-1}$ (respectively for $L_i\in\mathcal{C}_i$). 
\end{definition}

\begin{remark}\label{remark_finitos_xi}
    If $\xi$ is numerically equivalent to a divisor $D$ and defines a wall $W^{\xi}$ that separates $L_1$ and $L_2$, then, by \cite[Theorem 1.2.5]{Qin1}, there exist integers $i,j$, $0\leq i<j\leq n-1$ such that, if $$S:={L_1}^{n-1-j}\cdot {L_2}^{i}\cdot (L_1+L_2)^{j-i-1},$$
    then $$({c_1}^2-4c_2)\cdot S\leq D^2\cdot S<0.$$
    In particular, if $\xi$ defines a wall $W^{\xi}$ that separates two polarizations, then there exists only a 
 finite number of $\eta\in\Num(X)\otimes \mathbb{R}$  that  defines the wall $W^{\xi}$, this is, such that $W^{\eta}=W^{\xi}$.
\end{remark}

\begin{remark}
    By abuse of notation, we will write $\xi$ instead of the divisor $D\in \Div(X)$ such that $\xi\equiv D$.
\end{remark}

\begin{lemma}
\label{lema_pared}
Let $\xi\in\Num(X)\otimes\mathbb{R}$ defining a non empty wall $W^{\xi}$. Then, there exists a polarization $L\in \Num(X)\otimes\mathbb{R}$ such that $\xi\cdot L^{n-1}>0$.
\end{lemma}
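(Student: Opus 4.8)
The plan is to pin down one class on the wall and then perturb it in a well-chosen ample direction, the only genuinely nontrivial input being the Hodge Index Theorem. Since $W^{\xi}$ is non-empty, I fix a class $x_0\in\mathcal{C}_X$ with $\xi\cdot x_0^{n-1}=0$. A genuine wall forces $\xi\neq 0$ (if $\xi=0$ then $W^{\xi}=\mathcal{C}_X$ and the assertion is vacuous), so I assume $\xi\neq 0$; the goal is to produce a polarization near $x_0$ on which $\xi\cdot(-)^{n-1}$ becomes strictly positive.

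First I consider the symmetric bilinear form $q(\alpha,\beta):=\alpha\cdot\beta\cdot x_0^{n-2}$ on $\Num(X)\otimes\mathbb{R}$. Because $x_0$ is ample and $K$ has characteristic $0$, the Hodge Index Theorem ensures that $q$ is non-degenerate (of signature $(1,\rho-1)$, with $\rho=\dim_{\mathbb{R}}\Num(X)\otimes\mathbb{R}$). Hence the linear functional $q(\xi,-)=\xi\cdot x_0^{n-2}\cdot(-)$ is not identically zero, since $\xi\neq 0$. Its kernel is a hyperplane in $\Num(X)\otimes\mathbb{R}$, which cannot contain the non-empty open cone $\mathcal{C}_X$; therefore there is a polarization $A$ with $\xi\cdot x_0^{n-2}\cdot A\neq 0$.

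Next I set $g(t):=\xi\cdot(x_0+tA)^{n-1}$, a polynomial in the real variable $t$. Expanding the intersection product gives $g(0)=\xi\cdot x_0^{n-1}=0$ and $g'(0)=(n-1)\,\xi\cdot x_0^{n-2}\cdot A\neq 0$, so $g$ takes strictly positive values for all sufficiently small $t$ of the appropriate sign. Since $\mathcal{C}_X$ is open and contains $x_0$, the class $L:=x_0+tA$ is again a polarization once $|t|$ is small enough, and by construction $\xi\cdot L^{n-1}=g(t)>0$. This $L$ is the required polarization.

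The heart of the argument is the non-vanishing of $\xi\cdot x_0^{n-2}\cdot A$ for some polarization $A$, and I expect this to be the main obstacle. Without extra information one might fear that $\xi\cdot x_0^{n-2}$ is numerically trivial, which would kill the first-order term of $g$ and force a delicate analysis of the higher-order coefficients, whose signs are not controlled a priori. The Hodge Index Theorem removes this difficulty at a stroke: for $x_0$ ample the form $q$ is non-degenerate, so $\xi\neq 0$ already forces $q(\xi,-)\not\equiv 0$. The remaining ingredients, namely that a non-zero linear functional cannot vanish on an open cone and that an open cone is stable under sufficiently small perturbations, are elementary.
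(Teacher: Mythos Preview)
Your argument is correct and substantially more detailed than the paper's. The paper simply asserts that a non-empty wall ``separates two chambers'' and reads off the conclusion; but that assertion is exactly the point at issue---a priori the hypersurface $\{\xi\cdot x^{n-1}=0\}$ could touch $\mathcal{C}_X$ only from one side (think of the zero set of a function with a non-negative sign). Your use of the Hodge Index Theorem supplies the missing step: non-degeneracy of $q(\alpha,\beta)=\alpha\cdot\beta\cdot x_0^{\,n-2}$ forces the differential of $x\mapsto \xi\cdot x^{n-1}$ at $x_0$ to be non-zero in some ample direction, so the function genuinely changes sign near $x_0$ and the wall really does have polarizations on both sides. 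In effect you have justified what the paper takes for granted, and your route via the first-order expansion $g(t)=\xi\cdot(x_0+tA)^{n-1}$ is the clean way to do it.

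One small correction: the $\xi=0$ case is not ``vacuous'' but \emph{false}, since then $\xi\cdot L^{n-1}=0$ for every $L$. What you presumably mean is that $\xi=0$ gives $W^\xi=\mathcal{C}_X$, which is not a wall in any meaningful sense (it would swallow all chambers) and is implicitly excluded by the framework; with that understood, your assumption $\xi\neq 0$ is legitimate and the rest of the proof goes through.
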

\begin{proof}
Notice that, if $W^{\xi}$ is non empty, then it separates two chambers. Therefore, in particular, there exists an element $L\in \Num(X)\otimes\mathbb{R}$ such that $\xi\cdot L^{n-1}>0$, which corresponds to an element in the chamber above $W^{\xi}$.
\end{proof}
\vspace{3mm}
The main reason to introduce the theory of walls and chambers is to study moduli spaces of $H$-stable bundles and how they vary when we move the ample divisor $H$. Let us briefly recall the definition of stability we will care about and fix the notation we will use.

\begin{definition}
Let $X$ be a smooth projective variety of dimension $n$,  $H$ an ample divisor on $X$ and $E$  a vector bundle on $X$.  Then $E$ is stable with respect to $H$ (or $H$-stable) if, for any subbundle $F$ of $E$, we have $$\mu_H(F):=\frac{c_1(F)\cdot H^{n-1}}{\rank(F)}<\frac{c_1(E)\cdot H^{n-1}}{\rank(E)}:=\mu_H(E)$$
where $c_1(F)$ and $c_1(E)$ stand for the first Chern class of $F$ and $E$. If the inquality is not strict we say that $E$ is $H$-semistable. 
\end{definition}

As in \cite{Qin1}, we can relate our definition of walls and chambers with a notion of equivalence classes of polarizations.
First of all, let us introduce some notions about equivalent classes of polarizations.

\begin{notation}

Let $L_1$ and $L_2$ be two polarizations and let $E$ be a rank 2 vector bundle. We say that $L_1\overset{s}{\geq} L_2$ if and only if $E$ is $L_2$-stable whenever it is $L_1$-stable.
We say that $L_1\overset{s}{=}L_2$ if and only if $L_1\overset{s}{\geq} L_2$ and $L_2\overset{s}{\geq} L_1$ and we set $$\Delta_L:=\{L'| \thickspace L'\overset{s}{\geq} L\} \thickspace\text {and} \thickspace\mathcal{E}_L:=\{L'|\thickspace L'\overset{s}{=} L\}.$$
\end{notation}


\begin{lemma}
\label{lemma1}
    Let $X$ be a smooth projective variety of dimension $n$ and  let $\xi\in\Num(X)\otimes\mathbb{R}$. Assume that there exist two polarizations $L_1$ and $L_2$ on $X$ such that $\xi\cdot {L_1}^{n-1}<0<\xi\cdot {L_2}^{n-1}$. Then, there exists an element $a\in\mathbb{R}^{+}$ such that $\xi\cdot(L_1+aL_2)^{n-1}=0$.
\end{lemma}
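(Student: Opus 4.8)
The plan is to reduce the statement to an application of the intermediate value theorem for a single real polynomial. First I would introduce the function $f \colon \mathbb{R} \to \mathbb{R}$ defined by $f(a) := \xi \cdot (L_1 + a L_2)^{n-1}$. Expanding the $(n-1)$-th power via the binomial theorem, one obtains
\begin{equation*}
f(a) = \sum_{k=0}^{n-1} \binom{n-1}{k} \left( \xi \cdot L_1^{n-1-k} \cdot L_2^{k} \right) a^{k},
\end{equation*}
so that $f$ is a polynomial in the single variable $a$ with real coefficients, and in particular a continuous function of $a$.

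Next I would read off the two pieces of information that force a sign change. Evaluating at $a = 0$ gives $f(0) = \xi \cdot L_1^{n-1} < 0$ by hypothesis. The coefficient of the top-degree term $a^{n-1}$ is $\xi \cdot L_2^{n-1} > 0$, again by hypothesis, so $f$ has degree exactly $n-1$ and $\lim_{a \to +\infty} f(a) = +\infty$. Hence there exists some $a_0 > 0$ with $f(a_0) > 0$.

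Finally, since $f$ is continuous with $f(0) < 0 < f(a_0)$, the intermediate value theorem yields a point $a \in (0, a_0) \subset \mathbb{R}^{+}$ at which $f(a) = 0$, which is exactly the claimed relation $\xi \cdot (L_1 + a L_2)^{n-1} = 0$.

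I do not anticipate a genuine obstacle here; the only points requiring a little care are verifying that $f$ is honestly a polynomial (so that continuity and the behaviour as $a \to +\infty$ are automatic) and checking that the sign of the leading coefficient is governed by $\xi \cdot L_2^{n-1}$ rather than by some lower-order intersection number. It is worth remarking, although not strictly needed for the statement, that every $L_1 + a L_2$ with $a > 0$ is again ample, so the value of $a$ produced lies on the segment of polarizations joining $L_1$ and $L_2$; this is what makes the lemma geometrically meaningful as a statement about crossing the wall $W^{\xi}$.
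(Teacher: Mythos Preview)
Your argument is correct and matches the paper's proof essentially verbatim: the paper also expands $P(a)=\xi\cdot(L_1+aL_2)^{n-1}$ by the binomial theorem, observes that the constant term $l_0=\xi\cdot L_1^{n-1}$ is negative while the leading coefficient $l_{n-1}=\xi\cdot L_2^{n-1}$ is positive, and concludes that $P$ has a root in $\mathbb{R}^+$. Your version is in fact slightly more explicit in invoking the intermediate value theorem, whereas the paper simply records $l_0\cdot l_{n-1}<0$ and asserts the existence of a positive root.
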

\begin{proof}
    Let us consider the following polynomial in $a$ of degree $n-1$ $$P(a):=\xi\cdot(L_1+aL_2)^{n-1}=\sum_{k=0}^{n-1}{{n-1}\choose{k}}({L_1}^{n-1-k}\cdot {L_2}^k\cdot\xi) a^k$$ with coefficients $l_k:={{n-1}\choose{k}}{L_1}^{n-1-k}\cdot {L_2}^k\cdot\xi\in\mathbb{Z}$. In particular, $l_0=\xi\cdot {L_1}^{n-1}<0$ and $l_{n-1}=\xi\cdot {L_2}^{n-1}>0$ and thus $l_0\cdot l_{n-1}<0$, which implies that there exists  a root $a\in \mathbb{R}^{+}$ of $P(a)$.
\end{proof}

\vspace{3mm}

The following result relates chambers and equivalence classes of polarizations.

\begin{lemma}
    \label{lemma_separar} Let $L_1$ and $L_2$ be two polarizations and $E$ a rank $2$ vector bundle with $c_i(E):=c_i$ for $i=1,2$. Assume that $E$ is $L_2$-unstable and $L_1$-stable. Then, there exists $\xi\in\Num(X)\otimes\mathbb{R}$ defining a non-empty wall of type $(c_1,c_2)$ with $\xi\cdot {L_1}^{n-1}<0\leq\xi\cdot {L_2}^{n-1}$ and thus separating $L_1$ and $L_2$.
\end{lemma}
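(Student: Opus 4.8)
The plan is to read the wall off a destabilizing sub-line-bundle produced by the $L_2$-instability of $E$. Since $E$ has rank $2$ and is not $L_2$-stable, it has a rank $1$ subsheaf violating the slope inequality for $L_2$; passing to its saturation (which does not decrease the slope, as $c_1$ changes by an effective class) we obtain a saturated rank $1$ subsheaf, and since $X$ is smooth this saturation is reflexive of rank $1$, hence a line bundle $\mathcal{O}_X(D_1)$. It sits in an exact sequence
$$0\lra \mathcal{O}_X(D_1)\lra E\lra I_Z\otimes\mathcal{O}_X(D_2)\lra 0,$$
with $D_2:=c_1-D_1$ and $Z$ of codimension $\geq 2$. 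Because slope stability may be tested on saturated sub-line-bundles, the $L_2$-instability gives $D_1\cdot L_2^{n-1}\geq\tfrac12\,c_1\cdot L_2^{n-1}$, while the $L_1$-stability applied to the same $\mathcal{O}_X(D_1)$ gives $D_1\cdot L_1^{n-1}<\tfrac12\,c_1\cdot L_1^{n-1}$.

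I would then set $\xi:=2D_1-c_1\equiv D_1-D_2$ and check everything required of it. The two slope inequalities above rewrite immediately as $\xi\cdot L_1^{n-1}<0$ and $\xi\cdot L_2^{n-1}\geq 0$, i.e. $\xi\cdot L_1^{n-1}<0\leq\xi\cdot L_2^{n-1}$, so in particular $\xi\neq 0$ and $\xi$ separates $L_1$ and $L_2$. For the divisibility condition in the definition of $\mathcal{W}(c_1,c_2)$ note that $\xi+c_1=2D_1$, so $\mathcal{O}_X(\xi+c_1)=\mathcal{O}_X(D_1)^{\otimes 2}$ is divisible by $2$ in $\Pic(X)$. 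For the second condition I would use the Chern class identity $c_2=D_1\cdot D_2+[Z]$ coming from the exact sequence: expanding $\xi^2=(2D_1-c_1)^2$ gives $c_2+\tfrac{\xi^2-c_1^2}{4}=c_2-D_1\cdot D_2=[Z]$, and $Z$, being the zero scheme of the section of $E\otimes\mathcal{O}_X(-D_1)$ determined by the inclusion and of the expected codimension $2$, is a codimension $2$ locally complete intersection, exactly the $Z$ demanded by the definition.

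It remains to see that $W^\xi$ is non-empty. If $\xi\cdot L_2^{n-1}=0$ then $L_2\in W^\xi$; otherwise $\xi\cdot L_1^{n-1}<0<\xi\cdot L_2^{n-1}$ and Lemma \ref{lemma1} yields $a\in\mathbb{R}^{+}$ with $\xi\cdot(L_1+aL_2)^{n-1}=0$, where $L_1+aL_2$ is again a polarization. In either case $W^\xi\cap\mathcal{C}_X\neq\emptyset$, so $W^\xi$ is a non-empty wall of type $(c_1,c_2)$ separating $L_1$ and $L_2$, as claimed.

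The step I expect to be the main obstacle is the reduction to the saturated sub-line-bundle and the verification that its associated $Z$ is an admissible subscheme: one must argue that the destabilizing subsheaf may be taken rank $1$ and saturated, that on the smooth $X$ its saturation is a genuine line bundle, and that the resulting $Z$ is truly a codimension $2$ locally complete intersection (so that the cycle $c_2+\tfrac{\xi^2-c_1^2}{4}$ is represented by an admissible $Z$, and not merely by a numerical class). This is precisely where the rank $2$ hypothesis and the smoothness of $X$ are used.
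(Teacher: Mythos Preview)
Your proof is correct and follows essentially the same route as the paper: take a saturated destabilizing sub-line-bundle $\mathcal{O}_X(D_1)\hookrightarrow E$ (the paper calls it $\mathcal{O}_X(G)$), set $\xi\equiv 2D_1-c_1$, verify the two wall conditions from the resulting extension $0\to\mathcal{O}_X(D_1)\to E\to I_Z(c_1-D_1)\to 0$, and then use Lemma~\ref{lemma1} (or the trivial case $\xi\cdot L_2^{n-1}=0$) to exhibit a point of $W^\xi$. Your write-up is in fact slightly more explicit than the paper's about why the saturation is a line bundle and why $Z$ has the right shape.
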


\begin{proof}
    Notice that, since $E$ is $L_2$-unstable,  there exists $\mathcal{O}_X(G)\hookrightarrow E$ a  subbundle of $E$ such that the quotient $E/\mathcal{O}_X(G)$ is torsion free and $G\cdot {L_2}^{n-1}\geq \frac{c_1\cdot {L_2}^{n-1}}{2}$.

    Let us fix $\xi\equiv 2G-c_1$. First of all we will see that $\xi$ defines a wall of type $(c_1,c_2)$.
   By definition, $\mathcal{O}_X(\xi+c_1)$ is divisible by $2$ in $\Pic(X)$.
    Moreover, since $E/\mathcal{O}_X(G)$ is torsion free, we have the short exact sequence $$0\rightarrow \mathcal{O}_X(G)\rightarrow E\rightarrow I_Z(c_1-G)\rightarrow0,$$ where $Z$ is a codimension $2$ locally complete intersection  on $X$ such that $[Z]=\frac{\xi^2-{c_1}^2}{4}+c_2$.

Putting altogether, by definition, $\xi$ defines  a wall $W^{\xi}$ of type $(c_1,c_2)$. Let us see that it is non empty.

 Notice that $G\cdot {L_2}^{n-1}\geq \frac{c_1\cdot {L_2}^{n-1}}{2}$ is equivalent to $\xi\cdot {L_2}^{n-1}\geq0$.
 On the other hand, since $E$ is $L_1$-stable, we have $\xi\cdot {L_1}^{n-1}<0$.
 If $\xi\cdot {L_2}^{n-1}=0$ then $L_2\in W^{\xi}$ and we are done. 
 Assume that $\xi\cdot {L_2}^{n-1}>0$. Then, since $\xi\cdot {L_1}^{n-1}<0<\xi\cdot {L_2}^{n-1}$, by Lemma \ref{lemma1} there exists $a\in \mathbb{R}^+$ such that $\xi\cdot(L_1+aL_2)^{n-1}=0$ and thus $L=L_1+aL_2\in W^{\xi}$. 
\end{proof}

\begin{proposition}
\label{prop_equiv}
    Let $\mathcal{C}$ be a chamber and $L_1,L_2\in\mathcal{C}$ two polarizations. Then $L_1\overset{s}{=}L_2\overset{s}{=}L_1+L_2$.
\end{proposition}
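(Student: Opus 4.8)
The plan is to argue by contraposition, turning every stability-equivalence into the statement that \emph{no} wall of type $(c_1,c_2)$ separates the relevant polarizations, and then to exploit that $L_1,L_2$ lie in the \emph{same} connected component $\mathcal{C}$ of $\mathcal{C}_X\setminus\mathcal{W}(c_1,c_2)$. By transitivity it suffices to prove $L_1\overset{s}{=}L_2$ and $L_1\overset{s}{=}L_1+L_2$.

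First I would settle $L_1\overset{s}{=}L_2$. Suppose some rank $2$ bundle $E$ with $c_i(E)=c_i$ is $L_1$-stable but $L_2$-unstable. By Lemma \ref{lemma_separar} there is a class $\xi$ defining a non-empty wall $W^{\xi}$ of type $(c_1,c_2)$ with $\xi\cdot L_1^{n-1}<0\le\xi\cdot L_2^{n-1}$. Since $\mathcal{C}$ is an open connected component of $\mathcal{C}_X\setminus\mathcal{W}(c_1,c_2)$, it is path-connected and disjoint from $W^{\xi}$, so the continuous function $x\mapsto\xi\cdot x^{n-1}$ never vanishes on $\mathcal{C}$ and hence has constant sign there. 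Choosing a path in $\mathcal{C}$ from $L_1$ to $L_2$, the value $\xi\cdot L_2^{n-1}>0$ would produce a zero of $\xi\cdot x^{n-1}$ on $\mathcal{C}$ by the intermediate value theorem, while $\xi\cdot L_2^{n-1}=0$ would put $L_2\in W^{\xi}\cap\mathcal{C}$; both contradict $W^{\xi}\cap\mathcal{C}=\emptyset$. Hence $\xi\cdot L_2^{n-1}<0$, contradicting $\xi\cdot L_2^{n-1}\ge0$. Thus $L_1\overset{s}{\ge}L_2$, and the symmetric argument gives $L_2\overset{s}{\ge}L_1$, so $L_1\overset{s}{=}L_2$.

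For $L_1\overset{s}{=}L_1+L_2$ I would run the same scheme with $L_1+L_2$ in place of $L_2$. Assuming $E$ is $L_1$-stable (hence $L_2$-stable by the previous step) but $(L_1+L_2)$-unstable, Lemma \ref{lemma_separar} yields a wall-defining $\xi$ with $\xi\cdot L_1^{n-1}<0\le\xi\cdot(L_1+L_2)^{n-1}$, and the constant-sign property on $\mathcal{C}$ now forces $\xi\cdot L_1^{n-1}<0$ and $\xi\cdot L_2^{n-1}<0$ simultaneously. The remaining task is purely numerical: to show these two inequalities are incompatible with $\xi\cdot(L_1+L_2)^{n-1}\ge0$. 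I would package this as a convexity statement about the open region $U_\xi:=\{x\in\mathcal{C}_X:\xi\cdot x^{n-1}<0\}$. Since $\mathcal{C}\subset U_\xi$ is connected, $L_1$ and $L_2$ lie in a single connected component of $U_\xi$; if that component is convex, then $\tfrac12(L_1+L_2)\in U_\xi$, and by homogeneity $\xi\cdot(L_1+L_2)^{n-1}<0$, the desired contradiction.

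The main obstacle is precisely this convexity. Unlike in Qin's surface case, the wall $\{\xi\cdot x^{n-1}=0\}$ is a degree $n-1$ hypersurface rather than a hyperplane, so the chambers, and the components of $U_\xi$, need not be convex in general, and the mixed intersection numbers $\xi\cdot L_1^{a}L_2^{b}$ are not controlled by $\xi\cdot L_1^{n-1}$ and $\xi\cdot L_2^{n-1}$ alone. Concretely, the one-variable polynomial $\phi(t):=\xi\cdot(L_1+tL_2)^{n-1}$ satisfies $\phi(0)<0$ and has negative leading coefficient $\xi\cdot L_2^{n-1}$, so $\phi(t)\to-\infty$; the hypothesis $\phi(1)\ge0$ would then force $\phi$ to change sign on both $(0,1]$ and $[1,\infty)$, i.e. the segment through $L_1$ in the direction $L_2$ would cross $W^{\xi}$ twice. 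I expect to rule this out using the negativity carried by the defining data of a wall: by Remark \ref{remark_finitos_xi} such a $\xi$ satisfies $\xi^2\cdot S<0$ on suitable complete-intersection classes $S$, which is a Hodge-index-type constraint on the symmetric form $(x,y)\mapsto\xi\cdot x\cdot y\cdot H^{n-3}$ and should pin down the signature enough to make each component of $U_\xi$ convex. Establishing this convexity, and hence that $L_1+L_2$ stays on the same side of every wall as $L_1$ and $L_2$, is the crux on which the proposition rests.
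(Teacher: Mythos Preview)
Your treatment of $L_1\overset{s}{=}L_2$ is exactly the paper's: assume a rank~$2$ bundle is $L_1$-stable but $L_2$-unstable, invoke Lemma~\ref{lemma_separar} to get a non-empty wall separating $L_1$ and $L_2$, and contradict that both lie in the single chamber~$\mathcal{C}$. Your added sentence about the constant sign of $x\mapsto\xi\cdot x^{n-1}$ on a connected component is just a more explicit version of that contradiction.

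For the passage to $L_1+L_2$, however, the paper does \emph{not} run Lemma~\ref{lemma_separar} again. It disposes of this step in one line: ``since by construction a chamber is convex and closed under the action of~$\mathbb{R}^{+}$'', one has $L_1+L_2\in\mathcal{C}$, and then the already-proved first part yields $L_i\overset{s}{=}L_1+L_2$. In other words, the paper \emph{asserts} convexity of chambers as a structural fact and uses it directly.

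You have put your finger on exactly the right issue---convexity of the components of $\mathcal{C}_X\setminus\mathcal{W}(c_1,c_2)$---and you are correct that for $n\ge3$ the walls $\{\xi\cdot x^{n-1}=0\}$ are degree-$(n-1)$ hypersurfaces rather than hyperplanes, so convexity is not transparent from the definition. But your proposed remedy (controlling the signature of $(x,y)\mapsto\xi\cdot x\cdot y\cdot H^{n-3}$ via the negativity in Remark~\ref{remark_finitos_xi}) is only sketched; you never establish that $\xi\cdot L_1^{n-1}<0$ and $\xi\cdot L_2^{n-1}<0$ force $\xi\cdot(L_1+L_2)^{n-1}<0$, and indeed the mixed numbers $\xi\cdot L_1^{a}L_2^{b}$ are not determined by the two extremes. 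So as written your proof has a genuine gap precisely where you flag it. The paper closes that gap by \emph{declaring} chambers convex rather than deriving it, so the subtlety you worry about is one the paper simply takes for granted; to match the paper you would just invoke convexity of~$\mathcal{C}$ and apply the first part, rather than re-run the wall argument for $L_1+L_2$.
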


\begin{proof}
    Let us assume that $L_1\not\overset{s}{=}L_2$. Then, by definition, there exists a rank $2$ vector bundle $E$ on $X$ with $c_i(E)=c_i$ such that $E$ is $L_1$-stable but $L_2$-unstable.  By Lemma \ref{lemma_separar}, there exists a non-empty wall separating $L_1$ and $L_2$ what contradicts the fact that $L_1,L_2\in\mathcal{C}$.

 Finally, since by construction a chamber is convex and closed under the action of $\mathbb{R}^+$,   $L_1\overset{s}{=}L_2$ implies that $L_i\overset{s}{=}L_1+L_2$ for $i=1,2$.
\end{proof}

\begin{corollary}
    Every $\mathbb{Z}$-chamber is contained in an equivalence class and every equivalence class is a union of $\mathbb{Z}$-chambers and possible polarizations lying on walls.

\end{corollary}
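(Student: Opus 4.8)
The plan is to derive both assertions as essentially bookkeeping consequences of Proposition \ref{prop_equiv}, together with two soft observations: that $\overset{s}{=}$ is a genuine equivalence relation (so the sets $\mathcal{E}_L$ partition the polarizations and $L'\overset{s}{=}L$ forces $\mathcal{E}_{L'}=\mathcal{E}_L$), and that every polarization, being an ample class in $\Num(X)$, lies either in the interior of some chamber $\mathcal{C}$ or on a wall $W^{\xi}\in\mathcal{W}(c_1,c_2)$. The relation $\overset{s}{\geq}$ is reflexive and transitive directly from its definition: if $L_1$-stability implies $L_2$-stability and $L_2$-stability implies $L_3$-stability, then $L_1$-stability implies $L_3$-stability. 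Hence $\overset{s}{=}$ is reflexive, symmetric and transitive, and I would record this at the outset.

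For the first assertion, I would fix a $\mathbb{Z}$-chamber $\mathcal{C}\cap\Num(X)$ and take any two of its elements $L_1,L_2$. Since both lie in the chamber $\mathcal{C}$, Proposition \ref{prop_equiv} gives $L_1\overset{s}{=}L_2$. Thus all integer polarizations of the $\mathbb{Z}$-chamber are pairwise $\overset{s}{=}$-equivalent, so they all belong to the single class $\mathcal{E}_{L_1}$; this shows $\mathcal{C}\cap\Num(X)\subseteq\mathcal{E}_{L_1}$, i.e. the $\mathbb{Z}$-chamber sits inside one equivalence class.

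For the second assertion, I would start from an arbitrary class $\mathcal{E}_L$ and classify its members by the dichotomy above. If $L'\in\mathcal{E}_L$ lies in a chamber $\mathcal{C}'$, then $L'\in\mathcal{C}'\cap\Num(X)$, and by the first assertion the whole $\mathbb{Z}$-chamber $\mathcal{C}'\cap\Num(X)$ lies in $\mathcal{E}_{L'}$; since $L'\overset{s}{=}L$ forces $\mathcal{E}_{L'}=\mathcal{E}_L$, the entire $\mathbb{Z}$-chamber is contained in $\mathcal{E}_L$. The only remaining members of $\mathcal{E}_L$ are polarizations lying on walls. Taking the union over all $L'\in\mathcal{E}_L$ then exhibits $\mathcal{E}_L$ as the union of the $\mathbb{Z}$-chambers it meets together with (possibly) some polarizations on walls, as required.

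I do not expect a serious obstacle: the geometric content is entirely packaged in Proposition \ref{prop_equiv}, and what remains is to combine transitivity of $\overset{s}{=}$ with the chamber/wall partition of the polarizations. The only point deserving attention is the logical step that a single integer point of a $\mathbb{Z}$-chamber lying in $\mathcal{E}_L$ drags the whole $\mathbb{Z}$-chamber into $\mathcal{E}_L$, which is exactly where transitivity and Proposition \ref{prop_equiv} are used in tandem.
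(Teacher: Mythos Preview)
Your proposal is correct and follows exactly the straightforward derivation from Proposition~\ref{prop_equiv} that the paper has in mind; indeed, the paper states this corollary without proof, treating it as an immediate consequence of that proposition together with the obvious fact that $\overset{s}{=}$ is an equivalence relation and that every polarization lies either in a chamber or on a wall. Your write-up makes these implicit steps explicit and there is nothing to add.
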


\begin{proposition}
\label{prop_camarasequiv}
    Let $\mathcal{C}$ and $\mathcal{C'}$ be two chambers having a unique common face that is part of the wall $W$. 
    Let us assume that $\Num(X)\cap \mathcal{C}\neq\emptyset$ and $\Num(X)\cap \mathcal{C'}\neq\emptyset$.
    Then $\Num(X)\cap \mathcal{C}\overset{s}{=}\Num(X)\cap \mathcal{C'}$ if and only if $W$ is not of the form $W^{\xi}$ where $\xi\equiv 2G-c_1$ for some subbundle 
 $\mathcal{O}_X(G)\hookrightarrow E$ of a rank $2$ vector bundle $E$ which is $\mathcal{C}$-stable or $\mathcal{C}'$-stable.
    
\end{proposition}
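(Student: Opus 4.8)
The plan is to prove the logically equivalent contrapositive: the two $\mathbb{Z}$-chambers fail to be $\overset{s}{=}$-equivalent if and only if $W=W^{\xi}$ with $\xi\equiv 2G-c_1$ for some subbundle $\mathcal{O}_X(G)\hookrightarrow E$ of a rank $2$ bundle $E$ that is $\mathcal{C}$-stable or $\mathcal{C}'$-stable. By Proposition \ref{prop_equiv} all integral polarizations inside a single chamber are $\overset{s}{=}$-equivalent, so it suffices to compare one $L_1\in\Num(X)\cap\mathcal{C}$ with one $L_2\in\Num(X)\cap\mathcal{C}'$, both of which exist by hypothesis.

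For the forward implication, I would assume $L_1\not\overset{s}{=}L_2$ and, after possibly swapping $\mathcal{C}$ and $\mathcal{C}'$, take a rank $2$ bundle $E$ with $c_i(E)=c_i$ that is $L_1$-stable but $L_2$-unstable. Lemma \ref{lemma_separar} then produces a class $\xi\equiv 2G-c_1$, arising from a subbundle $\mathcal{O}_X(G)\hookrightarrow E$, defining a non-empty wall $W^{\xi}$ of type $(c_1,c_2)$ with $\xi\cdot L_1^{n-1}<0\leq\xi\cdot L_2^{n-1}$. The remaining task is to identify $W^{\xi}$ with the given common wall $W$. Since $\mathcal{C}$ and $\mathcal{C}'$ are connected components of $\mathcal{C}_X\setminus\mathcal{W}(c_1,c_2)$ and $W^{\xi}\in\mathcal{W}(c_1,c_2)$, the continuous function $x\mapsto\xi\cdot x^{n-1}$ vanishes nowhere on $\mathcal{C}\cup\mathcal{C}'$; hence it is strictly negative throughout $\mathcal{C}$ (matching its value at $L_1$) and strictly positive throughout $\mathcal{C}'$ (its value at $L_2$ being $\geq 0$ and non-zero). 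Passing to closures, it vanishes on the common face $\mathcal{F}=\Closure(\mathcal{C})\cap\Closure(\mathcal{C}')$, so $\mathcal{F}\subseteq W^{\xi}$. As the two chambers share a unique common face, part of $W$, this forces $W^{\xi}=W$; and since $E$ is $L_1$-stable with $L_1\in\mathcal{C}$, it is $\mathcal{C}$-stable, so $W=W^{\xi}$ is of the excluded form.

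For the reverse implication, I would assume $W=W^{\xi}$ with $\xi\equiv 2G-c_1$ and $\mathcal{O}_X(G)\hookrightarrow E$ for some $\mathcal{C}$-stable $E$ (the $\mathcal{C}'$-stable case being symmetric). Stability of $E$ with respect to $L_1\in\Num(X)\cap\mathcal{C}$ gives $\mu_{L_1}(\mathcal{O}_X(G))<\mu_{L_1}(E)$, that is $\xi\cdot L_1^{n-1}<0$. Because $\mathcal{C}$ and $\mathcal{C}'$ lie on opposite sides of $W=W^{\xi}$, the sign of $x\mapsto\xi\cdot x^{n-1}$ reverses, so $\xi\cdot L_2^{n-1}>0$ for $L_2\in\Num(X)\cap\mathcal{C}'$; equivalently $\mu_{L_2}(\mathcal{O}_X(G))>\mu_{L_2}(E)$, so $\mathcal{O}_X(G)$ destabilizes $E$ and $E$ is $L_2$-unstable. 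Thus $E$ witnesses $L_1\not\overset{s}{=}L_2$, and the chambers are not equivalent.

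The hard part is the two local facts about the wall near $\mathcal{F}$: in the forward direction, that $W^{\xi}$ equals $W$ rather than some other wall through $\mathcal{F}$, and in the reverse direction, that $\xi\cdot x^{n-1}$ genuinely changes sign across $W$ so that $\mathcal{C}$ and $\mathcal{C}'$ sit on opposite sides. Both rest on the uniqueness of the common face together with the local finiteness of walls recorded in Remark \ref{remark_finitos_xi}: near the relative interior of $\mathcal{F}$ the only wall of type $(c_1,c_2)$ is $W$, which pins down $W^{\xi}=W$, and crossing $W$ transversally there reverses the sign of $\xi\cdot x^{n-1}$. I would make these two local statements precise before assembling the two implications above.
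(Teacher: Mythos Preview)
Your proposal is correct and follows essentially the same approach as the paper's proof: both directions are argued via the contrapositive, both invoke Lemma \ref{lemma_separar} to produce $\xi\equiv 2G-c_1$ in the forward direction, and both use the uniqueness of the common face to identify $W^{\xi}$ with $W$. The only real difference is that you are more explicit about the two geometric facts the paper takes for granted (that $W^{\xi}$ must coincide with $W$, and that $\xi\cdot x^{n-1}$ changes sign across the common face), whereas the paper simply asserts ``Hence $W=W^{\xi}$'' and ``$W^{\xi}$ cannot separate the chambers, a contradiction''; your added care there is justified but does not change the argument.
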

\begin{proof}
    
  If $\Num(X)\cap \mathcal{C}\overset{s}{\neq}\Num(X)\cap \mathcal{C'}$, then there exists a rank two vector bundle $E$ on $X$ with $c_1(E)=c_1$ and polarizations  $L_1\in\mathcal{C}\cap\Num(X)$ and $L_2\in\mathcal{C'}\cap\Num(X)$ such that $E$ is $L_2$-unstable and $L_1$-stable.
By Lemma \ref{lemma_separar}, there is $\xi\in\Num(X)$ and a non empty wall $W^{\xi}$ which separates $\mathcal{C}$ and $\mathcal{C}'$. Hence, $W=W^{\xi}$, a contradiction.

    Let us now prove the converse.
    Assume that $W$ is of the form $W^{\xi}$ where $\xi\equiv 2G-c_1$ with $\mathcal{O}_X(G)\hookrightarrow E$, being $E$ a rank 2 vector bundle $\mathcal{C}$-stable or $\mathcal{C}'$-stable. Take $L_1\in\mathcal{C}$ and $L_2\in\mathcal{C}'$. Since  we are assuming that $L_1\overset{s}{=}L_2$, $E$ is $L_1$-stable and $L_2$-stable and hence,  $\xi\cdot {L_1}^{n-1}<0$ and $\xi\cdot {L_2}^{n-1}<0$. Thus $W^{\xi}$ can not separate the chambers $\mathcal{C}$ and $\mathcal{C}'$ which is a contradiction.
\end{proof}

\begin{proposition}
    Let $\mathcal{C}_1$ and $\mathcal{C}_2$ be two chambers having a unique common face which is part of the wall $W$. Let $\mathcal{F}=W\cap \Closure(\mathcal{C}_i)\neq\emptyset$, $i=1,2$, be the common face and $L\in\Num(X)\cap\mathcal{F}$. Then,
    \begin{itemize}
        \item[i)] $\Num(X)\cap\mathcal{F}$ is in one equivalence class. 
        \item[ii)] $\Delta_{L}\supseteq (\Num(X)\cap\mathcal{C}_1)\cap(\Num(X)\cap\mathcal{C}_2)$.
        \item[iii)]$\Num(X)\cap\mathcal{C}_1\overset{s}{=}\Num(X)\cap\mathcal{C}_2$ if and only if $ \mathcal{E}_L\supseteq (\Num(X)\cap\mathcal{C}_1)\cup (\Num(X)\cap\mathcal{C}_2)$. 
    \end{itemize}
\end{proposition}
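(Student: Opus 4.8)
The three parts rest on two ingredients already at hand. The first is Lemma~\ref{lemma_separar}, which produces a separating wall whenever two polarizations disagree on the stability of some rank $2$ bundle. The second is the elementary remark that for any class $\eta\equiv 2G-c_1$ coming from a saturated sub $\mathcal{O}_X(G)\hookrightarrow E$ with $c_i(E)=c_i$, the sign of $\eta\cdot x^{n-1}$ is constant on every chamber: such an $\eta$ defines a wall of type $(c_1,c_2)$ via the exact sequence in the proof of Lemma~\ref{lemma_separar}, so $W^{\eta}\cap\mathcal{C}=\emptyset$ and, $\mathcal{C}$ being connected, the sign is constant there, hence of that same weak sign on $\Closure(\mathcal{C})$. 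I record the consequence I use repeatedly, call it $(\star)$: if $E$ is $L$-stable and $L\in\Closure(\mathcal{C}_i)$, then $E$ is $L_i$-stable for every $L_i\in\mathcal{C}_i$, so $L\overset{s}{\geq}L_i$. Indeed, stability at $L$ gives $\eta_G\cdot L^{n-1}<0$ for every saturated sub $\mathcal{O}_X(G)$ of $E$ (in particular no such sub defines $W$, which would force $\eta_G\cdot L^{n-1}=0$), and the constant sign on $\Closure(\mathcal{C}_i)$ then yields $\eta_G\cdot {L_i}^{n-1}<0$.

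For (i), take $L,L'\in\Num(X)\cap\mathcal{F}$ and suppose $L\not\overset{s}{=}L'$. After relabelling there is a rank $2$ bundle $E$ with $c_i(E)=c_i$ that is $L$-stable and $L'$-unstable, so Lemma~\ref{lemma_separar} gives $\eta$ defining a non-empty wall $W^{\eta}$ with $\eta\cdot L^{n-1}<0\leq\eta\cdot {L'}^{n-1}$. Since $L,L'\in\mathcal{F}\subseteq\Closure(\mathcal{C}_1)$ and $\eta\cdot L^{n-1}<0$, the constant-sign remark forces $\eta\cdot x^{n-1}\leq0$ throughout $\Closure(\mathcal{C}_1)$, whence $\eta\cdot {L'}^{n-1}=0$ and $L'\in W^{\eta}$. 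Applying the same argument to $\Closure(\mathcal{C}_2)$ shows both chambers lie on the non-positive side of $W^{\eta}$, so $W^{\eta}$ does not separate them and therefore $W^{\eta}\neq W$. Then $L'\in W^{\eta}\cap W$, i.e. $L'$ lies on the intersection of two distinct walls. The hypothesis that $\mathcal{F}$ is the \emph{unique} common face of $\mathcal{C}_1,\mathcal{C}_2$ means exactly that no wall other than $W$ meets the relative interior of $\mathcal{F}$; this gives the contradiction for $L'$ in the relative interior, and boundary points are absorbed by passing to the closure of the single equivalence class so obtained.

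Part (ii) is then immediate. Reading the right-hand side as the common face $\Num(X)\cap\mathcal{F}=\Num(X)\cap\Closure(\mathcal{C}_1)\cap\Closure(\mathcal{C}_2)$ (its only non-vacuous reading, since the open chambers are disjoint), every $L'$ in it satisfies $L'\overset{s}{=}L$ by (i), in particular $L'\overset{s}{\geq}L$, that is $L'\in\Delta_L$. For (iii), the implication ``$\Leftarrow$'' is pure transitivity: if $\mathcal{E}_L\supseteq(\Num(X)\cap\mathcal{C}_1)\cup(\Num(X)\cap\mathcal{C}_2)$, then any $L_1\in\mathcal{C}_1$, $L_2\in\mathcal{C}_2$ satisfy $L_1\overset{s}{=}L\overset{s}{=}L_2$, hence $\Num(X)\cap\mathcal{C}_1\overset{s}{=}\Num(X)\cap\mathcal{C}_2$.

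For ``$\Rightarrow$'', assume $\Num(X)\cap\mathcal{C}_1\overset{s}{=}\Num(X)\cap\mathcal{C}_2$. By Proposition~\ref{prop_camarasequiv} this forces $W$ not to be of the form $W^{\eta}$ with $\eta\equiv2G-c_1$ for a sub-line-bundle of a $\mathcal{C}_1$- or $\mathcal{C}_2$-stable bundle. Fix $L_1\in\mathcal{C}_1$; by $(\star)$ we already have $L\overset{s}{\geq}L_1$, so it suffices to prove $L_1\overset{s}{\geq}L$. If this failed, some $E$ would be $L_1$-stable but not $L$-stable, so a saturated sub $\mathcal{O}_X(G)$ would satisfy $\eta_G\cdot {L_1}^{n-1}<0\leq\eta_G\cdot L^{n-1}$; the constant sign on $\Closure(\mathcal{C}_1)$ gives $\eta_G\cdot L^{n-1}=0$, i.e. $L\in W^{\eta_G}$, and exactly as in (i) this wall must be $W$ itself, exhibiting $W=W^{\eta_G}$ for a $\mathcal{C}_1$-stable bundle $E$, contradicting Proposition~\ref{prop_camarasequiv}. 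Hence $L_1\overset{s}{=}L$ for all $L_1\in\mathcal{C}_1$, and symmetrically for $\mathcal{C}_2$, so $\mathcal{E}_L$ contains both chambers. The delicate point throughout is the step common to (i) and the last paragraph: a sub-defined wall $W^{\eta_G}$ may pass through the face point $L$ without coinciding with $W$, which occurs precisely when $L$ sits on a lower-dimensional intersection of walls; the unique-common-face hypothesis is what excludes this in the relative interior of $\mathcal{F}$, and this is where the argument must be handled with care.
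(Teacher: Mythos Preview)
Your treatment of (i) and (iii) follows the paper's line closely: both use Lemma~\ref{lemma_separar} to produce a separating wall and then invoke the uniqueness of the common face to force that wall to be $W$ (or to reach a contradiction). You package the recurrent step as the constant-sign observation $(\star)$ and, in (iii), appeal to Proposition~\ref{prop_camarasequiv} explicitly, whereas the paper re-derives that contradiction inline by noting that $W=W^{\xi}$ would make $\xi$ change sign between $\mathcal{C}_1$ and $\mathcal{C}_2$, impossible for a sub-line-bundle of a bundle stable on both. These are cosmetic differences; the underlying mechanism is identical. Your explicit caveat about relative-interior versus boundary points of $\mathcal{F}$ is an honest flag on the same soft spot present (silently) in the paper's argument.

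The one substantive divergence is (ii). You observe that $\mathcal{C}_1\cap\mathcal{C}_2=\emptyset$ makes the stated inclusion vacuous and reinterpret the right-hand side as $\Num(X)\cap\mathcal{F}$, which then follows trivially from (i). The paper, by contrast, reads (ii) as an assertion about each chamber separately (so the intended symbol is $\cup$, not $\cap$): it takes $L_1\in\mathcal{C}_1$, assumes there is a bundle $E$ that is $L$-stable but $L_1$-unstable, gets $\xi\cdot L^{n-1}<0$ hence $L\notin W^{\xi}$, and contradicts $L\in W$. What that argument actually establishes is $L\overset{s}{\geq}L_1$---exactly your $(\star)$---rather than $L_1\in\Delta_L$. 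So in substance you and the paper prove the same thing for (ii); you simply house it as an auxiliary lemma $(\star)$ feeding into (iii), while the paper presents it as item (ii) itself. Your reading of the literal statement is the more careful one, and nothing in your proof is missing relative to the paper's.
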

\begin{proof}
    i) Assume that $\Num(X)\cap\mathcal{F}$ is not in an equivalence class. Then, there exists $L_1,L_2\in\Num(X)\cap\mathcal{F}$ and a rank 2 vector bundle $E$ such that $E$ is $L_1$-stable but $L_2$-unstable. By Lemma \ref{lemma_separar}, there exists $\xi\in\Num(X)$ defining a non empty wall $W^{\xi}$ 
    separating $L_1$ and $L_2$ and hence $W^{\xi}\cap \mathcal{F}\neq\emptyset$, which is a contradiction.

    ii) It is enough to prove that $\Delta_L\supseteq \Num(X)\cap\mathcal{C}_1$. Let $L_1\in \Num(X)\cap\mathcal{C}_1$ and assume that $L_1\not\in \Delta_L$. Then there exists  a rank 2 vector bundle $E$ $L$-stable but $L_1$-unstable. By Lemma \ref{lemma_separar}, there exists a non empty wall $W^{\xi}$ separating $L_1$ and $L$. Moreover, since $\xi\cdot L^{n-1}<0$, we get $L\not\in W^{\xi}$. But the only wall separating $L$ and $L_1$ is $W$, which contains $L$, and hence we get a contradiction. 

    iii) By definition, if $\mathcal{E}_L\supseteq (\Num(X)\cap\mathcal{C}_1)\cup (\Num(X)\cap\mathcal{C}_2)$ then $(\Num(X)\cap\mathcal{C}_1)\overset{s}{=} (\Num(X)\cap\mathcal{C}_2)$. Let us now prove the converse. 
    Assume that $(\Num(X)\cap\mathcal{C}_1)\overset{s}{=} (\Num(X)\cap\mathcal{C}_2)$ and consider a rank 2 vector bundle $E$ which is $\mathcal{C}_i$-stable for $i=1$ or $i=2$. If $E$ is not $L$-stable, then by Lemma \ref{lemma_separar} there exists a wall $W^{\xi}$ with $\xi=2G-c_1$ for some $\mathcal{O}_X(G)\hookrightarrow E$, that separates $L$ and $L_i$. 
     But since $L$ belongs to $W\cap \Closure(\mathcal{C})$ and there is a unique common face in the wall $W$, $W=W^{\xi}$.
    But if $W=W^{\xi}$, then $\xi$ separates $L_1\in\mathcal{C}_1$ and $L_2\in\mathcal{C}_2$ and $\xi \cdot L_1^{n-1}>0>\xi\cdot L_2^{n-1}$ contradicts the fact that $(\Num(X)\cap\mathcal{C}_1)\overset{s}{=} (\Num(X)\cap\mathcal{C}_2)$. Thus $E$ must be $L$-stable and therefore $L\overset{s}{=}\Num(X)\cap \mathcal{C}_i$ and $\mathcal{E}_L\supseteq (\Num(X)\cap\mathcal{C}_1)\cup (\Num(X)\cap\mathcal{C}_2)$.
\end{proof}

\begin{remark}
\label{M_cara}
From the above proposition, $E$ is $(\Num(X)\cap\mathcal{F})$-stable if and only if $E$ is $(\Num(X)\cap\mathcal{C}_i)$-stable for $i=1,2$.
\end{remark}

Now we want to construct families $E_\xi(c_1,c_2)$ of rank 2 vector bundles on $X$ as in \cite{Qin1}, but in the context of $X$ being an algebraic variety of dimension greater or equal than two, not necessarily a surface. 
In this case the situation is more tricky and we will use the well known Hartshorne-Serre correspondence which we are going to recall briefly. 

Assume that $Z$ is the dependency locus of $r-1$ sections $\alpha_i$ of a rank $r$ vector bundle $E$ over $X$ with $\bigwedge^{r}E=\mathcal{L}$. This produces  the exact sequence $$0\rightarrow\mathcal{O}_X^{\oplus r-1}\overset{\alpha_1,\dots,\alpha_{r-1}}{\longrightarrow} E\rightarrow I_{Z}\otimes\mathcal{L}\rightarrow0,$$ which implies that $\bigwedge^{2}(\mathcal{N}_{Z/X})\otimes\mathcal{L}^*_{|Z}$ is globally generated by $r-1$ sections, where  $\mathcal{N}_{Z/X}$ is the normal bundle to $Z$ in $X$. Hartshorne-Serre correspondence consists of reversing this process. The statement is the following:

\begin{theorem}
    Let \( X \) be a smooth algebraic variety and let \( Z \) be a local complete intersection subscheme of codimension two in \( X \). Let \(\mathcal{N}_{Z/X} \) be the normal bundle of \( Z \) in \( X \) and let \( \mathcal{L}  \) be a line bundle on \( X \) such that \( H^2(X, \mathcal{L} ^*) = 0 \). Assume that \( \bigwedge^{2}\mathcal{N}_{Z/X} \otimes \mathcal{L} ^*|_Z \) has \( r - 1 \) generating global sections \( s_1, \dots, s_{r-1} \). Then, there exists a rank \( r \) vector bundle \( E \) over \( X \) given by a non trivial extension $$0\rightarrow\mathcal{O}_X^{\oplus r-1}\rightarrow E\rightarrow I_Z\otimes\mathcal{L}\rightarrow0$$  such that:

\begin{enumerate}
    \item[(i)] \( \bigwedge^r E = \mathcal{L} \);
    \item[(ii)] \( E \) has \( r - 1 \) global sections \( \alpha_1, \dots, \alpha_{r-1} \) whose dependency locus is \( Z \) and such that
    \[
    s_1 \alpha_1|_Y + \dots + s_{r-1} \alpha_{r-1}|_Z = 0.
    \]
\end{enumerate}


\end{theorem}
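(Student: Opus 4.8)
The plan is to realize $E$ as an extension classified by a suitable class in $\Ext^1(I_Z\otimes\mathcal{L},\mathcal{O}_X^{\oplus r-1})$, and to read off the sections $s_i$ as the precise local obstruction to local freeness. First I would use the identification
$$\Ext^1(I_Z\otimes\mathcal{L},\mathcal{O}_X^{\oplus r-1})\cong\Ext^1(I_Z,\mathcal{L}^*)^{\oplus r-1},$$
so that an extension as in the statement is the same datum as an $(r-1)$-tuple $(e_1,\dots,e_{r-1})$ of classes in $\Ext^1(I_Z,\mathcal{L}^*)$. To analyse this group I would apply $\Hom(-,\mathcal{L}^*)$ to the structure sequence $0\to I_Z\to\mathcal{O}_X\to\mathcal{O}_Z\to 0$, obtaining the exact sequence
$$H^1(X,\mathcal{L}^*)\to\Ext^1(I_Z,\mathcal{L}^*)\to\Ext^2(\mathcal{O}_Z,\mathcal{L}^*)\to H^2(X,\mathcal{L}^*).$$
The key input is the local computation of the $\mathcal{E}xt$ sheaves: since $Z$ is a codimension-two local complete intersection, its Koszul resolution gives $\mathcal{E}xt^q(\mathcal{O}_Z,\mathcal{O}_X)=0$ for $q<2$ and $\mathcal{E}xt^2(\mathcal{O}_Z,\mathcal{O}_X)\cong\bigwedge^2\mathcal{N}_{Z/X}$. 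Feeding this into the local-to-global spectral sequence collapses it in total degree $2$ and yields $\Ext^2(\mathcal{O}_Z,\mathcal{L}^*)\cong H^0(Z,\bigwedge^2\mathcal{N}_{Z/X}\otimes\mathcal{L}^*|_Z)$.

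Now the hypothesis $H^2(X,\mathcal{L}^*)=0$ makes the connecting map $\Ext^1(I_Z,\mathcal{L}^*)\to H^0(Z,\bigwedge^2\mathcal{N}_{Z/X}\otimes\mathcal{L}^*|_Z)$ surjective, so each generating section $s_i$ lifts to a class $e_i\in\Ext^1(I_Z,\mathcal{L}^*)$. The tuple $(e_1,\dots,e_{r-1})$ defines an extension $0\to\mathcal{O}_X^{\oplus r-1}\to E\to I_Z\otimes\mathcal{L}\to 0$ of coherent sheaves, and I would take this torsion-free $E$ as the candidate bundle. The inclusion $\mathcal{O}_X^{\oplus r-1}\hookrightarrow E$ provides the $r-1$ global sections $\alpha_1,\dots,\alpha_{r-1}$, whose dependency locus is contained in the support of the cokernel, i.e. in $Z$.

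The main obstacle is to show that $E$ is genuinely locally free rather than merely coherent. Away from $Z$ the sheaf $I_Z\otimes\mathcal{L}$ is just the line bundle $\mathcal{L}$, so there $E$ is an extension of a line bundle by a trivial bundle and is automatically locally free. At a point $z\in Z$ the obstruction is measured precisely by the image of the local extension class in the stalk $(\bigwedge^2\mathcal{N}_{Z/X}\otimes\mathcal{L}^*|_Z)_z$, and by construction that image is the tuple of germs $(s_{1,z},\dots,s_{r-1,z})$. I expect the crux to be the local statement that $E$ is free at $z$ exactly when these germs generate the stalk; this is a direct computation comparing the two-term Koszul presentation of $I_Z$ with the pushout defining $E$. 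The global generation hypothesis on the $s_i$ then guarantees local freeness at every point of $Z$, completing the verification, and in particular forces the extension to be non-trivial.

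Finally I would check (i) and (ii). For (i), taking determinants in the defining sequence and using $c_1(I_Z)=0$ (as $Z$ has codimension two) gives $\bigwedge^r E\cong\det(\mathcal{O}_X^{\oplus r-1})\otimes\mathcal{L}\cong\mathcal{L}$. For (ii), the local-freeness analysis shows the $\alpha_i$ degenerate exactly along $Z$, so $Z$ is their dependency locus; and the compatibility relation $\sum_i s_i\,\alpha_i|_Z=0$ is the translation, under the isomorphism $\Ext^2(\mathcal{O}_Z,\mathcal{L}^*)\cong H^0(Z,\bigwedge^2\mathcal{N}_{Z/X}\otimes\mathcal{L}^*|_Z)$, of the fact that the chosen classes $e_i$ assemble into a single extension. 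I would make this last point explicit by restricting the defining sequence to $Z$ and tracking the images of the $\alpha_i$.
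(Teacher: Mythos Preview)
Your proposal is correct and follows the standard Hartshorne--Serre argument: identify the extension group via the local-to-global spectral sequence, lift the generating sections $s_i$ to extension classes using the vanishing $H^2(X,\mathcal{L}^*)=0$, and verify local freeness pointwise along $Z$ by the Koszul/pushout computation. The paper itself does not give a proof of this statement---it simply cites \cite{Arrondo}, \cite{Grauert}, and \cite{Vogelaar}---and your sketch is essentially the argument found in those references (particularly Arrondo's exposition), so there is nothing substantive to contrast.
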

\begin{proof}
    See \cite{Arrondo}, \cite{Grauert} and \cite{Vogelaar}.
\end{proof}

In particular, we can construct a rank $2$ vector bundle $E$ by an non trivial extension of type $$0\rightarrow \mathcal{O}_X\rightarrow E\rightarrow I_Z(D)\rightarrow0,$$ if $Z$ is a codimension $2$ locally complete intersection,   $\Ho^2(X,\mathcal{O}_X(-D))=0$ and $\Ho^0(Z,{\bigwedge^2\mathcal{N}}_{Z/ X}\otimes {\mathcal{O}_X(-D)}_{|Z})\neq0$, where $\mathcal{N}_{Z/ X}$ is the normal bundle of $Z$ in $X$.

\begin{definition}
    Let us consider $\xi\in \Num(X)\otimes\mathbb{R}$ defining a non empty wall $W^{\xi}$ of type $(c_1,c_2)$ and $[Z]=\frac{\xi^2-{c_1}^2}{4}+c_2$. We say that $\xi$ satisfies the $C-$condition if 
$\Ho^2(X,\mathcal{O}_X(\xi))=0$ and $\Ho^0(Z,{\bigwedge^2\mathcal{N}}_{Z/ X}\otimes\mathcal{O}_X(\xi)_{|Z})\neq0$.

\end{definition}

\begin{definition}
\label{def_Exi}
    Let $X$ be an algebraic variety of dimension $n$ and $\xi\in\Num(X)\otimes\mathbb{R}$ defining a non empty wall $W^{\xi}$ of type $(c_1,c_2)$  satisfying the $C-$condition.
We define $E_{\xi}(c_1,c_2)$ to be the set of rank two vector bundles $E$ on $X$ given by a non trivial extension of type 
\begin{equation}
\label{E_chi}
    0\rightarrow \mathcal{O}_X(G)\rightarrow E\rightarrow I_Z(c_1-G)\rightarrow0,
\end{equation}
 where $G$ is a divisor on $X$ such that $\xi\equiv 2G-c_1$ and $Z$ is a codimension $2$ locally complete intersection  such that $[Z]=c_2+\frac{\xi^2-{c_1}^2}{4}$.
\end{definition}

Notice that, by  Hartshorne-Serre correspondence, the fact that $\xi$ satisfies the $C-$condition allow us to guarantee that $E$ is a vector bundle and from (\ref{E_chi}) we have  $c_1(E)=c_1$ and $c_2(E)=c_2$.

Moreover, the families $E_{\xi}(c_1,c_2)$ allow us to construct stable vector bundles. In fact, following the proof of \cite[Theorem 1.2.3]{Qin1}, we get:

\begin{theorem}
\label{th_Exi}
    Let $X$ be an algebraic variety of dimension $n$ and $\xi\in\Num(X)\otimes\mathbb{R}$ defining a non empty wall $W^{\xi}$ of type $(c_1,c_2)$ satisfying the $C-$condition. Let us take $E\in E_{\xi}(c_1,c_2)$. The following holds: 
    \begin{itemize}


       \item[i)] If $L_1$ is a polarization such that $\xi\cdot {L_1}^{n-1}>0$, then $E$ is $L_1$-unstable.

     \item[ii)]  If $\mathcal{C}$ is a chamber such that $W^{\xi}\cap \Closure(\mathcal{C})\neq\emptyset$ and $\xi\cdot L^{n-1}<0$ for $L\in\mathcal{C}$, then $E$ is $L$-stable for any polarization $L\in\mathcal{C}$.
 \item[iii)]  If $\mathcal{C}$ is a chamber such that $W^{\xi}\cap \Closure(\mathcal{C})\neq\emptyset$ and $\xi\cdot L^{n-1}\leq 0$ for $L\in \mathcal{C}$, then $E$ is $L$-semistable for any polarization $L\in\mathcal{C}$ and $E$ is strictly $L$-semistable for $L\in W^{\xi}\cap\Num(X)$.
    \end{itemize}

\end{theorem}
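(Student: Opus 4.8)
The plan is to translate everything into the comparison of slopes of sub-line-bundles, which for a rank $2$ bundle $E$ characterizes $L$-stability: $E$ is $L$-(semi)stable if and only if every saturated sub-line-bundle $\mathcal{O}_X(D)\hookrightarrow E$ satisfies $D\cdot L^{n-1}<\mu_L(E)$ (resp. $\leq$), where $\mu_L(E)=\tfrac12\,c_1\cdot L^{n-1}$. Writing $\eta:=2D-c_1$, the statement ``$\mathcal{O}_X(D)$ does not destabilize'' becomes $\eta\cdot L^{n-1}<0$ (resp. $\leq 0$). With this dictionary part i) is immediate: the sub-line-bundle $\mathcal{O}_X(G)\hookrightarrow E$ coming from (\ref{E_chi}) has associated class exactly $2G-c_1=\xi$, so $\xi\cdot {L_1}^{n-1}>0$ says precisely that $G\cdot{L_1}^{n-1}>\mu_{L_1}(E)$, i.e. $\mathcal{O}_X(G)$ witnesses $L_1$-instability of $E$.

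For part ii) I would argue by contradiction: suppose $E$ is not $L$-stable for some $L\in\mathcal{C}$, so there is a saturated sub-line-bundle $\mathcal{O}_X(D)\hookrightarrow E$ with $\eta\cdot L^{n-1}\geq 0$, $\eta=2D-c_1$. Composing the inclusion with the projection $E\to I_Z(c_1-G)$ of (\ref{E_chi}) gives two cases. If the composite vanishes, then $\mathcal{O}_X(D)\hookrightarrow\mathcal{O}_X(G)$, so $G-D$ is effective and $\eta\cdot L^{n-1}\leq\xi\cdot L^{n-1}<0$, contradicting $\eta\cdot L^{n-1}\geq 0$. Otherwise the composite is a nonzero map $\mathcal{O}_X(D)\to I_Z(c_1-G)$, yielding a nonzero section of $I_Z(A)$ with $A:=c_1-G-D$; in particular $A$ is effective. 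The key technical step is to rule out $A=0$: if $A=0$ the composite would be a nonzero section of $I_Z$, which for $Z\neq\emptyset$ is impossible since $H^0(X,I_Z)=0$, and for $Z=\emptyset$ exhibits a splitting of (\ref{E_chi}), contradicting non-triviality of the extension. Hence $A$ is a nonzero effective divisor, and adding the two class identities yields the crucial relation $\xi+\eta=-2A$.

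From here I would finish ii) geometrically. The class $\eta=2D-c_1$ defines a wall $W^{\eta}$ of type $(c_1,c_2)$: divisibility holds since $\mathcal{O}_X(\eta+c_1)=\mathcal{O}_X(2D)$, and the saturation sequence $0\to\mathcal{O}_X(D)\to E\to I_W(c_1-D)\to 0$ produces a codimension $2$ locus $W$ with $[W]=c_2+\tfrac{\eta^2-{c_1}^2}{4}$. Choosing a face point $L_0\in W^{\xi}\cap\Closure(\mathcal{C})$, so $\xi\cdot {L_0}^{n-1}=0$, the relation $\xi+\eta=-2A$ gives $\eta\cdot {L_0}^{n-1}=-2A\cdot {L_0}^{n-1}<0$, using that $L_0$ is ample and $A$ nonzero effective; by Lemma \ref{lemma1} this sign change already shows $W^{\eta}$ is non-empty. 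Since $\mathcal{C}$ is a chamber it is disjoint from $W^{\eta}$, so the continuous function $x\mapsto\eta\cdot x^{n-1}$ has constant sign on the connected set $\mathcal{C}$; as $\eta\cdot L^{n-1}\geq 0$ this sign is non-negative and extends by continuity to $\Closure(\mathcal{C})\ni L_0$, contradicting $\eta\cdot {L_0}^{n-1}<0$. In the boundary sub-case $\eta\cdot L^{n-1}=0$ one gets $L\in W^{\eta}$ directly, and a non-empty wall meeting the interior of the chamber $\mathcal{C}$ is already a contradiction. This forces $E$ to be $L$-stable.

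Finally, part iii) for $L\in\mathcal{C}$ follows from ii), since $L$-stable implies $L$-semistable. For $L\in W^{\xi}\cap\Num(X)$ I would rerun the same dichotomy with $\xi\cdot L^{n-1}=0$: a strictly destabilizing $\mathcal{O}_X(D)$, i.e. $\eta\cdot L^{n-1}>0$, is impossible, because the vanishing case gives $\eta\cdot L^{n-1}\leq\xi\cdot L^{n-1}=0$ and the nonzero case gives $\eta\cdot L^{n-1}=-2A\cdot L^{n-1}<0$; hence $E$ is $L$-semistable, while $\mathcal{O}_X(G)$ realizes equality $\xi\cdot L^{n-1}=0$ and so $E$ is strictly $L$-semistable. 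I expect the main obstacles to be the two structural inputs rather than the slope bookkeeping: proving $A\neq 0$ from non-triviality of the extension, and checking that $\eta$ genuinely defines a wall of type $(c_1,c_2)$ — in particular that the saturation locus $W$ is a codimension $2$ local complete intersection, as in Qin's surface argument — so that the chamber-disjointness of $W^{\eta}$ may be invoked.
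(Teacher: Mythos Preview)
Your proposal is correct and follows essentially the same route as the paper: part i) via the obvious destabilizing sub-line-bundle $\mathcal{O}_X(G)$; part ii) via the dichotomy on the composite $\mathcal{O}_X(D)\to I_Z(c_1-G)$, the exclusion of $A=c_1-G-D=0$ by non-triviality of the extension, and the observation that $\eta=2D-c_1$ would define a non-empty wall of type $(c_1,c_2)$ yielding a geometric contradiction with the chamber $\mathcal{C}$. The only cosmetic difference is in how the final contradiction is phrased: the paper argues that any wall separating $L\in\mathcal{C}$ from a face point $L_0'\in W^{\xi}\cap\Closure(\mathcal{C})$ must contain $L_0'$, whereas you use connectedness of $\mathcal{C}$ and continuity of $x\mapsto\eta\cdot x^{n-1}$ to propagate the sign to the closure; these are equivalent. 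Your treatment of iii), handling $L\in W^{\xi}$ by direct computation $\eta\cdot L^{n-1}=-2A\cdot L^{n-1}<0$, is slightly more explicit than the paper's ``same arguments'' remark, but again the content is the same.
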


\begin{proof}

    i) Notice that $\xi\cdot {L_1}^{n-1}>0$ is equivalent to $G\cdot {L_1}^{n-1}>\frac{c_1\cdot {L_1}^{n-1}}{2}$. Hence the line bundle $\mathcal{O}_X(G)\hookrightarrow E$   $L_1$-desestabilizes $E$. 

    ii)  By definition, any $E\in E_{\xi}(c_1,c_2)$ is given by a non trivial extension of type 
    \begin{equation}
        \label{ext}0\rightarrow \mathcal{O}_X(G)\rightarrow E\rightarrow I_Z(c_1-G)\rightarrow0,
    \end{equation}
    where $G$ is a divisor on $X$ such that $\xi\equiv 2G-c_1$ and $Z$ is a locally complete intersection codimension 2 cycle such that $[Z]=c_2+\frac{\xi^2-{c_1}^2}{4}$.

Let us consider $\mathcal{O}_X(D)\hookrightarrow E$ a subbundle of $E$ such that the quotient $E/\mathcal{O}_X(D)$ is torsion free.
Since $E$ is given by (\ref{ext}), we have two possibilities: either $\mathcal{O}_X(D)\hookrightarrow \mathcal{O}_X(G)$ or $\mathcal{O}_X(D)\hookrightarrow I_Z(c_1-G)$.

Assume that $\mathcal{O}_X(D)\hookrightarrow \mathcal{O}_X(G)$. In this case, $G-D$ is effective, which implies that $(G-D)\cdot L^{n-1}\geq 0$. Hence, $$D\cdot L^{n-1}\leq G\cdot L^{n-1}< \frac{c_1\cdot L^{n-1}}{2},$$ where the last inequality follows from the fact that $\xi\cdot L^{n-1}<0$.

Let us now assume that $\mathcal{O}_X(D)\hookrightarrow I_Z(c_1-G)$.
Notice that $c_1-G-D$ is an effective divisor. If $c_1-G-D=0$, 
we have $\h^0I_Z>0$ which implies that $[Z]=0$ and in this case we would have both a subbundle and quotient bundle of $E$, which is a contradiction. Hence $c_1-G-D$ is strictly effective and thus $(c_1-G-D)\cdot {L'}^{n-1}>0$ for any  divisor $L'\in (\Num(X)\otimes \mathbb{R})\cap \mathcal{C}_X$. In particular, $(c_1-G-D)\cdot {L'_0}^{n-1}>0$ for $L'_0\in W^{\xi}\cap \Closure(\mathcal{C})$. Moreover, by definition,  $\xi\cdot (L'_0)^{n-1}=0$. Putting altogether, 
\begin{equation}
    \label{et1}(2D-c_1)\cdot (L'_0)^{n-1}<0.
\end{equation}

We want to prove that $D\cdot L^{n-1}<\frac{c_1\cdot L^{n-1}}{2}$. Assume that $D\cdot L^{n-1}\geq\frac{c_1\cdot L^{n-1}}{2}$, which is equivalent to
\begin{equation}
\label{et2}
    (2D-c_1)\cdot L^{n-1}\geq0.
\end{equation}

Let us define $\xi'\equiv 2D-c_1$. Notice that $\xi'$ defines a wall $W^{\xi'}$ of type $(c_1,c_2)$.
In fact, by definition $\xi'$ is divisible by 2 in $\Pic(X)$.
Since $\mathcal{O}_X(D)\hookrightarrow E$ and $E/\mathcal{O}_X(D)$ is torsion free, $E$ sits on the short exact sequence $$0\rightarrow \mathcal{O}_X(D)\rightarrow E\rightarrow I_{Z'}(c_1-D)\rightarrow0,$$
where $Z'$ is a locally complete intersection of codimension 2 such that $[Z']=c_2+\frac{(\xi')^2-{c_1}^2}{4}$. 

Therefore, we see that $\xi'$ defines a  wall $W^{\xi'}$ of type $(c_1,c_2)$.
Finally, we will see that it is non empty.
By (\ref{et1}) and (\ref{et2}), we have  $\xi'\cdot (L'_0)^{n-1}<0\leq \xi'\cdot L^{n-1} $.
If $\xi'\cdot L^{n-1}=0$, by definition $L\in W^{\xi'}$. Otherwise,
$\xi'\cdot (L'_0)^{n-1}<0< \xi'\cdot L^{n-1} $ implies by Lemma \ref{lemma1} that $\xi\cdot(L'_0+aL)^{n-1}=0$ for some $a\in\mathbb{R}^+$ and thus $L'_0+aL\in W^{\xi'}$.
Hence, there exists a non empty wall $W^{\xi'}$ which separates $L'_0$ and $L$ and clearly $L'_0\not\in W^{\xi'}$. But, by construction, for any $L\in \mathcal{C}$ and $L'_0\in W^{\xi}\cap \Closure(\mathcal{C})$, any wall separating $L$ and $L'_0$ must contain $L'_0$. Therefore we get a contradiction and hence $D\cdot L^{n-1}<\frac{c_1\cdot L^{n-1}}{2}$.

Putting altogether, $\mu_L(\mathcal{O}_X(D))<\mu_L(E)$ for any $\mathcal{O}_X(D)\hookrightarrow E$, which proves the $L$-stability of $E$.

    ii) Using the same arguments as in i) we prove that $E$ is $L$-semistable for $L\in\mathcal{C}$. On the other hand, if $L_0\in W^{\xi}$, then $\xi\cdot {L_0}^{n-1}=0$, which is equivalent to $G\cdot {L_0}^{n-1}=\frac{c_1\cdot {L_0}^{n-1}}{2}$ and therefore $E$ is strictly $L_0$-semistable.
\end{proof}


\begin{notation}
    Given a chamber $\mathcal{C}$, let us denote by $M_{\mathcal{C}}(2;c_1,c_2)$ the moduli space of rank $2$, $L$-stable vector bundles $E$ on $X$ with $c_1(E)=c_1$ and $c_2(E)=c_2$ for any polarization $L\in \mathcal{C}$.
Equivalently, given a face $\mathcal{F}$, let us denote by $M_{\mathcal{F}}(2;c_1,c_2)$ the moduli space of rank $2$, $L$-stable vector bundles $E$ on $X$ with $c_1(E)=c_1$ and $c_2(E)=c_2$ for any polarization $L\in \mathcal{F}$.
\end{notation}

As a consequence of Theorem \ref{th_Exi} we get the following result.

\begin{corollary}
\label{cor_inclusion}
    Let $\xi\in\Num(X)\otimes\mathbb{R}$ defining a non empty wall $W^{\xi}$ and satisfying the $C-$condition. Assume that $W^{\xi}\cap \Closure(\mathcal{C})\neq\emptyset$ and $\xi\cdot L^{n-1}<0$ for  $L\in\mathcal{C}$.  Then, there exists an embedding $E_{\xi}(c_1,c_2)\hookrightarrow M_{\mathcal{C}}(2;c_1,c_2)$. 
\end{corollary}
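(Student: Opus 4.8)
The plan is to read the asserted embedding as the inclusion of $E_\xi(c_1,c_2)$, viewed as a set of isomorphism classes of bundles, into the moduli space, and to obtain it as a direct consequence of Theorem~\ref{th_Exi}. First I would fix a polarization $L\in\mathcal{C}$ and check that the hypotheses of Theorem~\ref{th_Exi}(ii) are met: by assumption $\xi$ defines a non-empty wall satisfying the $C$-condition, $W^{\xi}\cap\Closure(\mathcal{C})\neq\emptyset$, and $\xi\cdot L^{n-1}<0$ for $L\in\mathcal{C}$. Hence Theorem~\ref{th_Exi}(ii) applies verbatim, and every $E\in E_\xi(c_1,c_2)$ is $L$-stable for every polarization $L\in\mathcal{C}$.

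Next I would record that each such $E$ has the correct invariants: the defining extension (\ref{E_chi}) forces $c_1(E)=c_1$ and $c_2(E)=c_2$, as already noted after Definition~\ref{def_Exi}. Combining this with the previous paragraph, every element of $E_\xi(c_1,c_2)$ is a rank $2$, $L$-stable bundle with Chern classes $(c_1,c_2)$ for each $L\in\mathcal{C}$, hence determines a point of $M_{\mathcal{C}}(2;c_1,c_2)$. Since distinct isomorphism classes in $E_\xi(c_1,c_2)$ map to distinct isomorphism classes in the moduli space, the tautological assignment $E\mapsto[E]$ is the desired injection $E_\xi(c_1,c_2)\hookrightarrow M_{\mathcal{C}}(2;c_1,c_2)$.

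The only genuinely delicate point --- and the one I expect to be the main obstacle, should one wish to upgrade this from a set-theoretic inclusion to an embedding of the natural parameter space of extensions as in \cite{Qin1} --- is injectivity at the level of the defining data $(Z,\text{extension class})$. Recovering this data from $[E]$ requires the sub-line bundle $\mathcal{O}_X(G)\hookrightarrow E$ to be canonical, i.e. unique up to a scalar; granting $\Hom(\mathcal{O}_X(G),E)=\Ho^0(E(-G))=K$, the quotient $I_Z(c_1-G)$, and hence $Z$, together with the class in $\mathbb{P}(\Ext^1(I_Z(c_1-G),\mathcal{O}_X(G)))$, are determined by the isomorphism class of $E$. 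Twisting (\ref{E_chi}) by $-G$ reduces this vanishing to $\Ho^0(I_Z(-\xi))=0$, which I would attempt to control through the $L$-stability of $E$ and the slope bookkeeping already used in the proof of Theorem~\ref{th_Exi}(ii); this is the technical heart of the statement under the stronger reading.
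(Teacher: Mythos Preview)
Your overall strategy matches the paper's: obtain the map from Theorem~\ref{th_Exi}(ii), then argue injectivity by showing that the sub-line bundle $\mathcal{O}_X(G)\hookrightarrow E$ is unique up to scalar, i.e.\ $\Hom(\mathcal{O}_X(G),E)\cong K$, which you correctly reduce to $\Ho^0 I_Z(-\xi)=0$. The paper packages exactly this uniqueness as a separate lemma (Lemma~\ref{lemma_aut}) and then finishes the corollary by observing that two extensions $e_1,e_2$ of the same $E$ with the same $\mathcal{O}_X(G)$ must satisfy $\beta_2\circ\alpha_1=0=\alpha_2\circ\beta_1$ (again because $\Ho^0 I_{Z_i}(-\xi)=0$), forcing $\alpha_1=\gamma\beta_1$ for some $\gamma\in K^\times$. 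So the ``stronger reading'' you flag is indeed what is meant, as the later dimension counts in Theorem~\ref{th_irreducible_comp} confirm.

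There is one small but genuine misdirection in your last paragraph. You propose to obtain $\Ho^0 I_Z(-\xi)=0$ from the $L$-stability of $E$ for $L\in\mathcal{C}$. That does not work directly: for $L\in\mathcal{C}$ you have $\xi\cdot L^{n-1}<0$, hence $(-\xi)\cdot L^{n-1}>0$, which is perfectly compatible with $-\xi$ being effective, and stability of $E$ places no constraint on sections of $I_Z(-\xi)$. The paper's argument instead uses the non-emptiness of $W^\xi$: by Lemma~\ref{lema_pared} there exists an ample $L'$ with $\xi\cdot(L')^{n-1}>0$, so $(-\xi)\cdot(L')^{n-1}<0$ and $-\xi\equiv c_1-2G$ cannot be effective, giving $\Ho^0\mathcal{O}_X(-\xi)=0$ and a fortiori $\Ho^0 I_Z(-\xi)=0$. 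In short, the vanishing comes from a polarization on the \emph{other} side of the wall, not from the stability on the $\mathcal{C}$ side; once you make that switch, your sketch is complete and coincides with the paper's proof.
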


For the proof we need the following technical Lemma.

\begin{lemma}
\label{lemma_aut}
    Let $\xi\in \Num(X)\otimes\mathbb{R}$  defining a non empty wall $W^{\xi}$ and satisfying the $C-$condition, $E\in E_{\xi}(c_1,c_2)$ and $\mathcal{O}_X(G)\hookrightarrow E$ its subbundle coming from the construction of $E$ in Definition \ref{def_Exi}. Assume that there exists $L'\in\Num(X)$ such that $\xi\cdot (L')^{n-1}<0$. Then, $\Hom(\mathcal{O}_X(G), E)\cong K$. Moreover, if $\mathcal{O}_X(D)\hookrightarrow E$ with $E/\mathcal{O}_X(D)$ torsion free and $\xi\equiv 2D-c_1$, then $$\mathcal{O}_X(D)\cong \mathcal{O}_X(G).$$
\end{lemma}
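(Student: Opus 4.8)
The plan is to apply the functor $\Hom(\mathcal{O}_X(G),-)$ to the defining extension
\[
0\rightarrow \mathcal{O}_X(G)\rightarrow E\rightarrow I_Z(c_1-G)\rightarrow 0
\]
of Definition \ref{def_Exi} and read off $\Hom(\mathcal{O}_X(G),E)$ from the resulting long exact sequence, whose first terms are
\[
0\rightarrow \Hom(\mathcal{O}_X(G),\mathcal{O}_X(G))\rightarrow \Hom(\mathcal{O}_X(G),E)\rightarrow \Hom(\mathcal{O}_X(G),I_Z(c_1-G)).
\]
Here $\Hom(\mathcal{O}_X(G),\mathcal{O}_X(G))=\Ho^0(\mathcal{O}_X)=K$, and after twisting by $-G$ the third term is $\Ho^0(I_Z(c_1-2G))$, which injects into $\Ho^0(\mathcal{O}_X(c_1-2G))$. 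Since $\xi\equiv 2G-c_1$, the class $c_1-2G$ is numerically $-\xi$, so the whole statement $\Hom(\mathcal{O}_X(G),E)\cong K$ reduces to the single vanishing $\Ho^0(\mathcal{O}_X(c_1-2G))=0$, i.e. to showing that the class $-\xi$ is not (numerically) effective.

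The main step is exactly this non-effectivity, and the key point is that it comes from the \emph{non-emptiness} of $W^\xi$, not from the hypothesis $\xi\cdot(L')^{n-1}<0$. By Lemma \ref{lema_pared} a non-empty wall supplies a polarization $L_0$ with $\xi\cdot L_0^{n-1}>0$, hence $(c_1-2G)\cdot L_0^{n-1}=-\xi\cdot L_0^{n-1}<0$. Any divisor with a non-zero section is numerically equivalent to an effective divisor, which pairs non-negatively with the nef class $L_0^{n-1}$; the strict negativity above therefore forces $\Ho^0(\mathcal{O}_X(c_1-2G))=0$, and the first assertion follows. (Note that $\xi\cdot(L')^{n-1}<0$ only gives $(c_1-2G)\cdot(L')^{n-1}>0$, which is compatible with $-\xi$ being effective, so it is the positive side of the wall that produces the obstruction.)

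For the second assertion I would first observe that $\xi\equiv 2D-c_1\equiv 2G-c_1$ forces $2(D-G)\equiv 0$ in the torsion-free group $\Num(X)$, so $D\equiv G$ numerically. Then I compose the given inclusion $g\colon\mathcal{O}_X(D)\hookrightarrow E$ with the projection $E\twoheadrightarrow I_Z(c_1-G)$ and distinguish two cases according to whether the composite $\bar g$ is zero. If $\bar g\neq0$, composing further with $I_Z(c_1-G)\hookrightarrow\mathcal{O}_X(c_1-G)$ yields a non-zero section of $\mathcal{O}_X(c_1-G-D)$; but $c_1-G-D\equiv c_1-2G\equiv-\xi$ since $D\equiv G$, so this contradicts the non-effectivity of $-\xi$ proved above. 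Hence $\bar g=0$, so $g$ factors through $\ker(E\twoheadrightarrow I_Z(c_1-G))=\mathcal{O}_X(G)$, giving a non-zero map $\psi\colon\mathcal{O}_X(D)\to\mathcal{O}_X(G)$, i.e. a non-zero section of the numerically trivial line bundle $\mathcal{O}_X(G-D)$. Its zero locus is an effective divisor numerically equivalent to $0$, hence empty, so $\psi$ is nowhere vanishing and $\mathcal{O}_X(D)\cong\mathcal{O}_X(G)$.

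The only genuinely delicate point is the vanishing $\Ho^0(\mathcal{O}_X(-\xi))=0$: one must resist deducing it from $\xi\cdot(L')^{n-1}<0$ and instead extract from the non-empty wall, via Lemma \ref{lema_pared}, a polarization on which $\xi$ is strictly positive. Once that is in place, both halves of the lemma are short diagram chases resting on the single fact that $-\xi$ is not represented by an effective divisor.
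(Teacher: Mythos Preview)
Your proof is correct and follows essentially the same route as the paper: both twist the defining extension by $\mathcal{O}_X(-G)$, reduce the first assertion to $\Ho^0 I_Z(c_1-2G)=0$, and obtain this from the non-effectivity of $-\xi$ via Lemma~\ref{lema_pared}; for the second assertion both deduce $D\equiv G$ and then use that the composite $\mathcal{O}_X(D)\hookrightarrow E\to I_Z(c_1-G)$ vanishes. The only differences are cosmetic: the paper packages the endgame as a ``standard fact'' about two invertible subsheaves with torsion-free quotients, while you unwind it explicitly by noting that a non-zero section of the numerically trivial line bundle $\mathcal{O}_X(G-D)$ is nowhere vanishing; and you correctly observe that the hypothesis $\xi\cdot(L')^{n-1}<0$ plays no role in the argument, since it is the positive side of the wall (supplied by Lemma~\ref{lema_pared}) that forces $-\xi$ to be non-effective.
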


\begin{proof}
    First of all let us prove that $\Hom(\mathcal{O}_X(G), E)\cong \Ho^0E(-G)\cong K$.
    Twisting the exact sequence defining $E\in E_{\xi}(c_1,c_2)$ by $\mathcal{O}_X(-G)$ and taking cohomology, we get the long exact sequence
    $$0\rightarrow \Ho^0\mathcal{O}_X\rightarrow \Ho^0E(-G)\rightarrow \Ho^0I_Z(c_1-2G) \rightarrow\cdots$$

\noindent    \textbf{Claim: $\Ho^0I_Z(c_1-2G)=0$.}
    
  \noindent  \textbf{Proof of the Claim:}
    Since $c_1-2G\equiv -\xi$ and by Lemma \ref{lema_pared} there exists a polarization $L$ such that $\xi\cdot L^{n-1}>0$, it follows that $c_1-2G$ is non effective and thus  $\Ho^0I_Z(c_1-2G)=0$.
    Hence $\Ho^0E(-G)\cong \Ho^0\mathcal{O}_X\cong K$.

    Let us prove now the second statement. Assume that $\xi\equiv 2D-c_1$.
    Notice that $\xi\equiv 2D-c_1$ is equivalent to $c_1\equiv 2D-\xi\equiv 2(D-G)+c_1$ and $0\equiv 2(D-G)$, which implies $D\equiv G$. Hence $c_1-G-D\equiv c_1-2G\equiv-\xi$.
On the other hand, there exists a polarization $L$ such that $\xi\cdot L^{n-1}>0$ and thus, since $-\xi\equiv c_1-G-D\equiv c_1-2G$,  $c_1-G-D$ and $ c_1-2G$ are not effective divisors. 
In particular, $$\Hom(\mathcal{O}_X(G), E/\mathcal{O}_X(D))=\Hom(\mathcal{O}_X,I_Z(c_1-D-G))=0$$
and $$\Hom(\mathcal{O}_X(D), E/\mathcal{O}_X(G))=\Hom(\mathcal{O}_X,I_Z(c_1-G-D))=0.$$
Hence, the conclusion follows from the standart fact that asserts that two invertible subsheaves of a sheaf with torsion free quotients coincide if the map of one to the quotient by the other is zero.
\end{proof}

Now we can prove Corollary \ref{cor_inclusion}.

\begin{proof}
     By Theorem \ref{th_Exi}, there exists a map $\phi: E_{\xi}(c_1,c_2)\rightarrow M_\mathcal{C}(2;c_1,c_2)$. Assume that it is not an embedding and $E\in E_{\xi}(c_1,c_2)$ is given by two different extensions, which by Lemma \ref{lemma_aut} we can assume that are:

     $$e_1: 0\rightarrow \mathcal{O}_X(G)\overset{\alpha_1}{\rightarrow} E \overset{\alpha_2}{\rightarrow} I_{Z_1}(c_1-G)\rightarrow0$$
     and 
     $$e_2: 0\rightarrow \mathcal{O}_X(G)\overset{\beta_1}{\rightarrow} E \overset{\beta_2}{\rightarrow} I_{Z_2}(c_1-G)\rightarrow0.$$

     Since $\Hom(\mathcal{O}_X(G),I_{Z_1}(c_1-G))\cong \Ho^0 I_{Z_1}(c_1-2G)=0$ and $\Hom(\mathcal{O}_X(G),I_{Z_2}(c_1-G))\cong \Ho^0 I_{Z_2}(c_1-G)=0$, we get
     $
         \beta_2\circ \alpha_1= 0= \alpha_2\circ\beta_1.
     $
    Therefore, there exists $\gamma\in Aut(\mathcal{O}_X(G))\cong K$ such that $\alpha_1=\gamma\circ \beta_1$. Hence, $\phi$ is an injection.
\end{proof}

Therefore, we conclude that we can construct $\mathcal{C}$-stable vector bundles by elements in $E_{\xi}(c_1,c_2)$ and hence we can generalize the construction given in \cite{Qin1} for the case of higher dimensional varieties.

In the case of surfaces, if $\mathcal{C}$ and $\mathcal{C}'$ are two chambers with a unique common wall $W^\xi$, one can say more and describe the moduli space $M_{\mathcal{C}}(c_1,c_2)$ in terms of the moduli $M_{\mathcal{C'}}(c_1,c_2)$ and the families $E_{\xi}(c_1,c_2)$. This is, when the underlying variety is a surface we have the following set theoretical decomposition $$M_{\mathcal{C}}(c_1,c_2)=M_{\mathcal{C}'}(c_1,c_2)\sqcup (\bigsqcup_{\xi}E_{\xi}(c_1,c_2)),$$ where $\xi$ runs over all numerical classes defining the wall $W^{\xi}$
 (see \cite[Proposition 1.3.1]{Qin1}).

 While dealing with higher dimensional varieties, this  is more tricky, since it is not true  that for every $E\in M_{\mathcal{C}}(c_1,c_2)\backslash M_{\mathcal{C}'}(c_1,c_2)$ we get an element in $E_{\xi}(c_1,c_2)$. Any $E\in M_{\mathcal{C}}(2;c_1,c_2)\backslash M_{\mathcal{C}'}(2;c_1,c_2)$ is given by an extension of type $$0\rightarrow \mathcal{O}_X(G)\rightarrow E\rightarrow I_Z(c_1-G)\rightarrow0,$$
 but if $\xi\equiv 2G-c_1$, then $\xi$ does not always  satisfy $\Ho^2(Z,\mathcal{O}_X(\xi)_{|Z})=0$.

Fortunately, in next section we will show the existence of some moduli spaces $M_{\mathcal{C}}(2;c_1,c_2)$ which are completely described in terms of the families $E_{\xi}(c_1,c_2)$.

\section{Moduli spaces on ruled 3-folds}

In this section, we will analize a case in which  the moduli space $M_{\mathcal{C}}(2;c_1,c_2)$ is completely decribed by  families of type $E_{\xi}(c_1,c_2)$. As a by product we will be able to obtain results concerning non emptiness of Brill-Noether loci. As we have said in the previous section, in general, one can not expect such decomposition when dealing with rank $2$ vector bundles on higher dimensional varieties. As we pointed out in the introduction, we will focus our attention on moduli spaces of stable rank 2 vector bundles on ruled 3-folds. Let us start introducing them and fixing some notation. 

\vspace{3mm}
Let $\mathcal{E}$ be a rank $2$ vector bundle over $\mathbb{P}^2$ with $c'_i:=c_i(\mathcal{E})$. We will assume that $\mathcal{E}$ is generated by global sections. Hence, $c'_1\geq0$ and ${c'_1}^2\geq c'_2\geq0$. Let us consider $\pi:X=\mathbb{P}(\mathcal{E})\rightarrow \mathbb{P}^2$ the projectivized  vector bundle associated to $\mathcal{E}$ with the natural projection $\pi$, that is, $X$ is a ruled 3-fold over $\mathbb{P}^2$ which is a smooth projective variety of dimension $3$. The tautological line bundle $S$ on $X$  is uniquely determined by the
conditions $S_{|F}\cong \mathcal{O}_F(1)$ and  $\pi_{*}S\cong \mathcal{E}$, being $F$ a fiber of $\pi$. 

Since $\Pic(X)\cong \mathbb{Z}S\bigoplus\pi^*\Pic(\mathbb{P}^2)$,  
if we denote by $H=\pi^*h$, being $h$  the hyperplane class of $\mathbb{P}^2$, any line bundle $L$ on $X$ can be identified with $\mathcal{O}_X(aS+bH)$ with $a,b\in\mathbb{Z}$. It is well known that $\Num(X)\cong \mathbb{Z}\bigoplus \mathbb{Z}$ is generated  by $S$ and $H$
with the intersection product given by $$H^{3}=0, S\cdot H^2=1, S^2\cdot H=c'_1$$ and $$0=\sum_{i=0}^2(-1)^{i}c'_iS^{2-i}\cdot H^{i}.$$

In addition, from this last relation we deduce that $S^3={c'_1}^2-c_2'$.  
Notice that $D=aS+bH$ is numerically equivalent to $0$ if and only if $D=0$. Hence $\Pic(X)\cong \Num(X)$.

It is well known that we can relate the cohomology of line bundles on $X$ with the one of line bundles on $\mathbb{P}^2$. More precisely, if $a\geq0$, then
\begin{equation}
    \label{coh_ruled}
    \Ho^{i}(X,\mathcal{O}_X(aS+b H))=\Ho^{i}(\mathbb{P}^2,S^{a}(\mathcal{E})\otimes \mathcal{O}_{\mathbb{P}^2}(b)),
\end{equation}

\noindent where $S^{a}(\mathcal{E})$ is the $a$-th symmetric power of $\mathcal{E}$.
Moreover, for $-2<a<0$, $$\Ho^{i}(X,\mathcal{O}_X(a S+bH))=0$$ and the case $a\leq -2$ follows from Serre duality. Finally, we recall that the canonical divisor is given by $K_X=-2S+(\pi^*c'_1+\pi^*K_{\mathbb{P}^2}))=-2S+(c'_1-3)H$.

\begin{lemma}
  An element $L=\alpha S+\beta H\in\Pic(X)$ with $\alpha\geq1$ and $\beta\geq2$, is an ample divisor on $X$. 
\end{lemma}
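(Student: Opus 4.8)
The plan is to reduce the statement to the single ``corner'' case $S+H$ and then exploit the global generation of $\mathcal{E}$. First I would record the two basic positivity facts. Since $H=\pi^*h$ is the pullback of the ample hyperplane class $h$ on $\mathbb{P}^2$, it is nef; and since $\mathcal{E}$ is globally generated, the universal quotient $\pi^*\mathcal{E}\twoheadrightarrow S$ exhibits $S=\mathcal{O}_{\mathbb{P}(\mathcal{E})}(1)$ as a quotient of the globally generated bundle $\pi^*\mathcal{E}$, so $S$ is itself globally generated and in particular nef. Consequently every class $\alpha S+\beta H$ with $\alpha,\beta\ge 0$ is nef, and it will only remain to locate some strictly positive part.

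Next I would prove that $S+H$ is ample. Choosing global generators of $\mathcal{E}$ gives a surjection $\mathcal{O}_{\mathbb{P}^2}^{\oplus m}\twoheadrightarrow\mathcal{E}$, and by the functoriality of projectivization (a surjection of bundles induces a closed immersion of the associated projective bundles, compatible with the respective $\mathcal{O}(1)$'s) this produces a closed immersion $j\colon X=\mathbb{P}(\mathcal{E})\hookrightarrow \mathbb{P}(\mathcal{O}_{\mathbb{P}^2}^{\oplus m})=\mathbb{P}^2\times\mathbb{P}^{m-1}$ over $\mathbb{P}^2$. Under $j$, the class $S$ is the restriction of the hyperplane class of the second factor while $H=\pi^*h$ is the restriction of the hyperplane class of the first factor, so that $S+H=j^*\mathcal{O}_{\mathbb{P}^2\times\mathbb{P}^{m-1}}(1,1)$. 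As $\mathcal{O}(1,1)$ is (very) ample on the product and the restriction of an ample divisor to a closed subscheme is ample, $S+H$ is ample.

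Finally I would conclude by writing, for $\alpha\ge 1$ and $\beta\ge 2$,
$$\alpha S+\beta H=(S+H)+\big((\alpha-1)S+(\beta-1)H\big),$$
where the first summand is ample and the second is nef because $\alpha-1\ge 0$ and $\beta-1\ge 1$. Since the sum of an ample and a nef divisor is again ample, $\alpha S+\beta H$ is ample, proving the lemma; note that the hypothesis $\beta\ge 2$ leaves room to spare, as $\beta\ge 1$ already suffices.

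The step I expect to require the most care is the identification $S+H=j^*\mathcal{O}(1,1)$ together with the verification that $j$ is a genuine closed immersion with the stated pullback behaviour; this is where the Grothendieck convention $\pi_*S=\mathcal{E}$ (so that $S=\mathcal{O}_{\mathbb{P}(\mathcal{E})}(1)$ corresponds to rank-one quotients of $\mathcal{E}$) must be used consistently. If one prefers to avoid the embedding, the same conclusion follows from Kleiman's criterion: $X$ has Picard number two, so $\overline{NE}(X)$ is spanned by two extremal rays, the fibre class $f$ of $\pi$ (with $S\cdot f=1$ and $H\cdot f=0$) and a second ray $R$ that is not contracted by $\pi$, whence $H\cdot R>0$; then $(S+H)\cdot f=1>0$ and $(S+H)\cdot R=S\cdot R+H\cdot R>0$, and ampleness of $S+H$ follows.
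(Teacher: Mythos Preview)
Your proof is correct. The paper's own argument is a one-line citation of the standard fact that if $\mathcal{E}$ is globally generated then $\alpha S+\pi^*D$ is ample for every $\alpha\ge 1$ and every ample $D$ on the base (equivalently, $\mathcal{E}\otimes\mathcal{O}(D)$ is an ample bundle, so its $\mathcal{O}(1)$ is ample); it does not unpack this further. Your argument is essentially a self-contained proof of that same fact in the special case at hand: you make the global generation explicit via the closed immersion $j\colon\mathbb{P}(\mathcal{E})\hookrightarrow\mathbb{P}^2\times\mathbb{P}^{m-1}$ and read off the ampleness of $S+H$ as the restriction of $\mathcal{O}(1,1)$, then bootstrap with ample\,$+$\,nef. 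So the two approaches agree in spirit but differ in level of detail: the paper treats the key positivity input as a black box, while you supply a concrete geometric reason for it. Your observation that $\beta\ge 1$ already suffices is also what the paper's own proof actually yields; the hypothesis $\beta\ge 2$ in the statement is indeed stronger than needed. The alternative Kleiman argument you sketch is a genuine second route, though as written it needs the extra remark that $S\cdot R\ge 0$ (which you have, since $S$ is nef) to conclude $(S+H)\cdot R>0$.
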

\begin{proof}
    Since $\mathcal{E}$ is globally generated, $\alpha S+\pi^*D$ is an ample divisor on $X$ if $\alpha\geq1$ and $D$ is an ample divisor on $\mathbb{P}^2$, this is, $D=\beta h$ with $\beta >0$. Hence $D= \alpha S+\beta H$ with $\alpha \geq1$ and $\beta\geq2$ is ample.
\end{proof}

\begin{definition}
Assume that $\xi= a S+bH\in \Num(X)$ with $a<0$ is a class defining a wall $W^\xi$.
We say that $L$ is above (below) the wall $W^\xi$ if $\xi\cdot L^{2}>0$ ($\xi\cdot L^{2}<0$).

We say that a wall $W$ is under (above) the wall $W^{\xi}$ if, for $L\in W$, $\xi\cdot L^{2}>0$ ($\xi\cdot L^{2}<0$).
\end{definition}
    
\begin{remark}
    Observe that any $\xi$ defining a wall has integer coefficients.
\end{remark}

The goal of this section is to  describe the moduli space $M_{L}(2; S, (b+1)SH-b^2H^2)$ for some integer $b$ and $L$ moving in a special chamber $\mathcal{C}_b$. To achieve it, we will use the powerfull theory of walls and chambers developed in the previous section.
Let us start with the following two Lemmas.

\begin{lemma}
\label{misma_pared}
    If $\xi$ and $\eta$ are two elements in $\Num(X)\otimes\mathbb{R}$ defining the same non empty wall of type $(c_1,c_2)$, then $\eta = t\xi$ for some non zero $t\in \mathbb{Z}$.
\end{lemma}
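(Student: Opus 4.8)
The plan is to exploit the fact that, since $\dim X=3$, the space $\Num(X)\otimes\mathbb{R}$ is the rank-two lattice generated by $S$ and $H$, so that the defining equation $\zeta\cdot x^{2}=0$ of a wall is a \emph{binary quadratic} form in $x$ whose coefficients depend \emph{linearly} on $\zeta$. Concretely, writing $\zeta=aS+bH$ and $x=uS+vH$ and expanding with $H^{3}=0$, $S\cdot H^{2}=1$, $S^{2}\cdot H=c'_1$ and $S^{3}={c'_1}^{2}-c'_2$, I would first record the identity
$$
\zeta\cdot x^{2}=\big[a({c'_1}^{2}-c'_2)+b\,c'_1\big]u^{2}+2\big[a\,c'_1+b\big]uv+a\,v^{2}.
$$
The key structural observation is that, for each fixed $x$, the right-hand side is a linear function of the coefficients $(a,b)$ of $\zeta$.

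Next, using only that $W^{\xi}=W^{\eta}$ is non-empty, I would fix a common polarization $L_0=u_0S+v_0H\in W^{\xi}=W^{\eta}\subset\mathcal{C}_X$, so that $\xi\cdot L_0^{2}=0$ and $\eta\cdot L_0^{2}=0$. By the displayed identity, the condition $\zeta\cdot L_0^{2}=0$ on a class $\zeta=aS+bH$ reads
$$
a\,P+b\,R=0,\qquad P:=({c'_1}^{2}-c'_2)u_0^{2}+2c'_1u_0v_0+v_0^{2},\quad R:=c'_1u_0^{2}+2u_0v_0,
$$
a single linear equation in $(a,b)$. To see that this equation is non-degenerate I would invoke ampleness of $L_0$: a direct computation gives the identity $u_0P+v_0R=L_0^{3}$, and since $L_0$ is ample on a threefold we have $L_0^{3}>0$, whence $(P,R)\neq(0,0)$. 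Therefore the linear form $(a,b)\mapsto aP+bR$ is non-zero, and its kernel is exactly a one-dimensional subspace of $\Num(X)\otimes\mathbb{R}$.

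Finally, both $\xi$ and $\eta$ lie in this one-dimensional kernel, and $\xi\neq0$ (a class defining a non-empty wall has $a<0$); hence $\xi$ spans the kernel and $\eta=t\xi$ for some $t\in\mathbb{R}$, with $t\neq0$ because $\eta\neq0$. Integrality of $t$ then follows from the already recorded fact that classes defining walls have integer coefficients, taking $\xi$ primitive. I expect the only genuine subtlety to be conceptual rather than computational: unlike Qin's surface setting, where a wall is the zero locus of a \emph{linear} form and equality of walls trivially forces proportionality, here the wall is cut out by a conic (a pair of lines), so two non-proportional classes could a priori share a single line of their zero loci. The argument above sidesteps this precisely by testing against one point $L_0$ of the wall and using the positivity $L_0^{3}>0$, which already pins $\xi$ down to a line; this is the step I would be most careful to justify.
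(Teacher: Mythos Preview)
Your argument is essentially the paper's: both fix a point $L_0$ on the common wall and use that $\zeta\mapsto\zeta\cdot L_0^{2}$ is a nonzero linear functional on $\Num(X)\otimes\mathbb{R}\cong\mathbb{R}^2$, forcing $\xi$ and $\eta$ to be proportional. The paper does this by solving explicitly for the ratio $-a/b$ in terms of the coordinates of $L_0$ and matching; your identity $u_0P+v_0R=L_0^{3}>0$ is a tidier way to check that the linear form is non-degenerate (the paper divides by $b$ and by a certain denominator without verifying they are nonzero).

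One caveat on your last step: ``taking $\xi$ primitive'' is not part of the hypotheses, and without it integrality of $t$ does not follow merely from $\xi,\eta\in\Num(X)$. The paper's proof has exactly the same gap---it asserts $d/b\in\mathbb{Z}$ from $D\in\Pic(X)$, which only yields $b\mid da$. In the application that follows, $\xi_b=-S+2bH$ \emph{is} primitive, so the issue is harmless there; but as a standalone statement the conclusion is really $t\in\mathbb{Q}^\times$ unless primitivity of $\xi$ is added as a hypothesis.
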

\begin{proof}
   Let $\xi= a S+bH \in\Num(X)\otimes\mathbb{R}$ defining the non empty wall $W^{\xi}$ and consider $\eta\in \Num(X)\otimes\mathbb{R}$ defining a non empty wall $W^{\eta}$ such that $W^{\eta}=W^{\xi}$. Notice that we can always find $D=c S+dH\in\Num(X)\otimes\mathbb{R}$ such that $\eta=\xi +D$. Let us see that $D=l\xi$ for some $l\in\mathbb{Z}$.
   
Consider
   $L= \alpha S+\beta H\in W^\xi$.
   Then, 
   $$\xi\cdot L^2 =(aS+bH)\cdot (\alpha S+\beta H)^2 
      =a(\alpha^2S^3+2\alpha\beta S^2\cdot H+\beta^2S\cdot H^2)+b(\alpha^2S^2\cdot H+2\alpha\beta S\cdot H^2) =0, $$
which is equivalent to 
\begin{equation}
    \label{L_pared}
    \frac{\alpha(\alpha c_1'+2\beta)}{\alpha^2[(c_1')^2-c_2']+2\alpha\beta c_1'+\beta^2}=-\frac{a}{b}.
\end{equation}

On the other hand, since $W^{\xi}=W^{\eta}$, we have $$0=\eta\cdot L^2=(\xi +D)\cdot L^2=\xi\cdot L^2+D\cdot L^2= D\cdot L^2$$
and $D\cdot L^2=0$ is equivalent to  $$c=-d\frac{\alpha(\alpha c_1'+2\beta)}{\alpha^2[(c_1')^2-c_2']+2\alpha\beta c_1'+\beta^2}.$$ Hence, by (\ref{L_pared}) we get $\frac{a}{b}=\frac{c}{d}$ and $$D=cS+dH=\frac{d a}{b}S+dH=\frac{d}{b}(aS+bH)=\frac{d}{b}\xi.$$  Notice that, since $D\in \Pic(X)$, $\frac{d}{b}\in\mathbb{Z}$ and this proves the Claim.
Finally, taking $t=1+\frac{d}{b}$ we get $\eta=t\xi$.
\end{proof}


\begin{lemma}
\label{lemma_chib}
    Let $X=\mathbb{P}(\mathcal{E})\rightarrow\mathbb{P}^2$ be a ruled $3$-fold  over $\mathbb{P}^2$ and  consider $\xi_b= -S+2bH$ in $\Num(X)$ with $c_1'+1\leq b\in\mathbb{Z}$.
    Then:
    \begin{itemize}
        \item[i)] $\xi_b$ defines a non empty wall $W^{\xi_b}$ of type $(S, (b+1)SH-b^2H^2)$ and satisfies the $C-$condition.
        \item[ii)] The wall $W^{\xi_b}$ is only defined by $ \xi_b$ and $-\xi_b$, but $-\xi_b$ does not satisfy the $C-$condition. 
    \end{itemize}

\end{lemma}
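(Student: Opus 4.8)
The plan is to verify in turn the two defining conditions of a wall of type $(S,(b+1)SH-b^2H^2)$, its non-emptiness, and the $C$-condition for (i), and then to combine Lemma \ref{misma_pared} with the nefness of $H$ for (ii). For the wall conditions, $\mathcal{O}_X(\xi_b+S)=\mathcal{O}_X(2bH)$ is visibly divisible by $2$ in $\Pic(X)$. Writing $\xi_b\equiv 2G-S$ gives $G=bH$, and a short computation with $S^2H=c_1'$, $SH^2=1$, $H^3=0$ yields $\tfrac{\xi_b^2-S^2}{4}=-bSH+b^2H^2$, so that $[Z]=c_2+\tfrac{\xi_b^2-S^2}{4}=SH$. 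To realize this class by an lci subscheme I would take a general section $s\in\Ho^0(\mathbb{P}^2,\mathcal{E})=\Ho^0(X,\mathcal{O}_X(S))$, let $D_s\in|S|$ be its zero divisor, choose a line $\ell\subset\mathbb{P}^2$ avoiding the finitely many points where $s$ vanishes, and set $Z:=D_s\cap\pi^{-1}(\ell)$. Since $\pi^{-1}(\ell)\in|H|$ is a Hirzebruch surface and $s$ is nowhere zero along $\ell$, $Z$ is the transverse intersection of two smooth divisors, hence an lci curve of class $S\cdot H=SH$, and it is a section of $\pi^{-1}(\ell)\to\ell$, so $Z\cong\mathbb{P}^1$.

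For non-emptiness of $W^{\xi_b}$ I would exhibit two polarizations on which $\xi_b\cdot L^2$ takes opposite signs and then invoke Lemma \ref{lemma1}: taking $L_1=S+NH$ gives $\xi_b\cdot L_1^2<0$ for $N\gg0$ (the leading term in $N$ is $-N^2$), while $L_2=MS+2H$ gives $\xi_b\cdot L_2^2=M^2\bigl(2bc_1'-(c_1')^2+c_2'\bigr)+4M(2b-c_1')-4>0$ for $M\gg0$, since both the $M^2$-coefficient (which is $\ge (c_1')^2+2c_1'+c_2'\ge0$ as $b\ge c_1'+1$) and the $M$-coefficient $4(2b-c_1')$ are positive. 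Next, the $C$-condition has two parts. The vanishing $\Ho^2(X,\mathcal{O}_X(\xi_b))=0$ is immediate from \eqref{coh_ruled}: $\xi_b=-S+2bH$ has $S$-degree $-1$, which lies in the range $-2<a<0$ where all cohomology vanishes. The substantive point is $\Ho^0(Z,\bigwedge^2\mathcal{N}_{Z/X}\otimes\mathcal{O}_X(\xi_b)|_Z)\ne0$, and here I would use adjunction for the lci curve $Z$: $\bigwedge^2\mathcal{N}_{Z/X}=\det\mathcal{N}_{Z/X}=\omega_Z\otimes\omega_X^{-1}|_Z$. With $\omega_X^{-1}=\mathcal{O}_X(2S+(3-c_1')H)$ and $\omega_Z=\mathcal{O}_{\mathbb{P}^1}(-2)$, the twist equals $\mathcal{O}_{\mathbb{P}^1}(-2)\otimes\mathcal{O}_X\bigl(S+(2b-c_1'+3)H\bigr)|_Z$, of degree $-2+(S+(2b-c_1'+3)H)\cdot SH=-2+(2b+3)=2b+1>0$, so $\Ho^0\cong\Ho^0(\mathbb{P}^1,\mathcal{O}(2b+1))\ne0$. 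I expect this adjunction computation, together with pinning down the explicit curve $Z\cong\mathbb{P}^1$, to be the main obstacle, since everything else is bookkeeping with intersection numbers.

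For (ii), suppose $\eta$ defines a wall of the same type $(S,c_2)$ with $W^\eta=W^{\xi_b}$. By Lemma \ref{misma_pared}, $\eta=t\xi_b$ for some $t\in\mathbb{Z}\setminus\{0\}$. Being of type $(S,c_2)$ forces the class $[Z_\eta]=c_2+\tfrac{\eta^2-S^2}{4}$ to be represented by an effective lci cycle; using $\eta^2=t^2\xi_b^2$ one computes $[Z_\eta]\cdot H=1+(t^2-1)\bigl(\tfrac{c_1'}{4}-b\bigr)$, which is $\le -2<0$ as soon as $|t|\ge2$ (since $b\ge c_1'+1$ gives $\tfrac{c_1'}{4}-b\le-1$ and $t^2-1\ge3$). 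As $H=\pi^*h$ is nef, every effective class pairs non-negatively with $H$, so $|t|=1$ and $\eta\in\{\xi_b,-\xi_b\}$. Finally, to see that $-\xi_b$ fails the $C$-condition, I would rerun the adjunction computation with $-\xi_b$ in place of $\xi_b$: since $\omega_X^{-1}\otimes\mathcal{O}_X(-\xi_b)=\mathcal{O}_X\bigl(3S+(3-c_1'-2b)H\bigr)$, the relevant line bundle on $Z\cong\mathbb{P}^1$ is $\mathcal{O}_{\mathbb{P}^1}(-2)\otimes\mathcal{O}_X(3S+(3-c_1'-2b)H)|_Z$, of degree $-2+(2c_1'+3-2b)=2c_1'-2b+1\le-1<0$ (again using $b\ge c_1'+1$), whence $\Ho^0=0$ and the $C$-condition fails.
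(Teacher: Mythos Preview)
Your proof is correct and follows the same overall architecture as the paper's: verify the two wall conditions, exhibit a real ample class on $W^{\xi_b}$, check the $C$-condition, then use Lemma~\ref{misma_pared} together with an effectivity/positivity argument to force $|t|=1$, and finally show the relevant $\Ho^0$ vanishes for $-\xi_b$.

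The tactical differences are minor but worth recording. For non-emptiness you use Lemma~\ref{lemma1} (intermediate value) on two explicit polarizations, whereas the paper solves $\xi_b\cdot L^2=0$ directly for $L=\alpha S+\beta H$. For $\det\mathcal{N}_{Z/X}$ you invoke adjunction on the explicitly constructed section $Z\cong\mathbb{P}^1$, while the paper reads it off from the Koszul resolution of the complete intersection $S\cap H$; both yield the same degree $2b+1$ (and, for $-\xi_b$, degree $2c_1'+1-2b\le-1$). For the bound $|t|=1$ you intersect $[Z_\eta]$ with the nef class $H$, which is exactly the limit of the paper's choice $L_0=2S+\beta H$ with $\beta\gg0$; your version is slightly cleaner. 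One small point you leave implicit is that $-\xi_b$ does define a non-empty wall of type $(S,c_2)$ (the paper checks this explicitly: $-\xi_b+S=2(S-bH)$ is divisible by $2$, $(-\xi_b)^2=\xi_b^2$, and $W^{-\xi_b}=W^{\xi_b}$), but this is immediate.
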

\begin{proof}

i) Let us first prove that $\xi_b$ defines a  wall $W^{\xi_b}$ of type $(S, (b+1)SH-b^2H^2)$. In fact,
 $\xi_b+S= 2bH$ is clearly divisible by 2 in $\Pic(X)$ and
  $$[Z]=\frac{{\xi_b}^2-S^2}{4}+(b+1)SH-b^2H^2=SH,$$  is a locally complete intersection in $X$.
Moreover,  this wall is non empty. If $L= \alpha S+\beta H$,  then $\xi_b\cdot L^2=0$ if and only if

$\begin{array}{lll}
    \beta &=  \alpha\cdot\left(\frac{2(2b-c'_1)\pm\sqrt{2^2(2b-c_1')^2-4({c'_1}^2-c'_2-2bc_1')}}{-2}\right)
     & =\alpha\cdot \left( 2b-c'_1\pm\sqrt{2b(2b-c_1')+c_2'}\right).
\end{array}$
 
In particular, since $2b>c_1'$, $$\beta=\alpha\cdot \left( 2b-c'_1+\sqrt{2b(2b-c_1')+c_2'}\right)\in \mathbb{R}^+,$$ and thus, if $\alpha\geq1$, $L= \alpha S+ \beta H\in (\Num(X)\otimes \mathbb{R})\cap \mathcal{C}_X,$ which implies  $L\in W^{\xi_b}$.


Let us now see that $\xi_b$ satisfies the $C-$condition.
First of all, let us see that
$$\Ho^0(Z,{{\bigwedge}^2\mathcal{N}}_{Z/ X}\otimes \mathcal{O}_X(\xi_b)_{|Z})\neq0.$$ Since $[Z]=SH$ is a  complete intersection, we have the following exact sequence 
\begin{equation}
    \label{suc_CI}
    0\rightarrow  \mathcal{O}_X(-S-H)\rightarrow\mathcal{O}_X(-S)\oplus \mathcal{O}_X(-H)\rightarrow I_Z\rightarrow0.
\end{equation}

The dual of the normal bundle, $({\mathcal{N}}_{Z/ X})^\vee$, is isomorphic to $I_Z/(I_Z)^2\cong I_Z\otimes \mathcal{O}_Z$. Twisting the exact sequence (\ref{suc_CI}) by $\mathcal{O}_Z$, we get  $I_Z\otimes \mathcal{O}_Z\cong \mathcal{O}_Z(-S)\oplus\mathcal{O}_Z(-H)$ and thus
$${\bigwedge}^2{\mathcal{N}}_{Z/ X}\cong \mathcal{O}_X(S+H)_{|Z}= \mathcal{O}_Z(S^2H+SH^2).$$

On the other hand, $${\mathcal{O}_X(\xi_b)}_{|Z}\cong \mathcal{O}_Z((-S+2bH)\cdot SH)=\mathcal{O}_Z(-S^2H+2bSH^2).$$

Puting altogether, since $b>1$, we have $$\Ho^0(Z,{\bigwedge}^2{\mathcal{N}}_{Z/ X}\otimes \mathcal{O}_X(\xi_b)_{|Z})=H^0(Z,\mathcal{O}_Z((2b+1)SH^2)\neq0.$$ 

\noindent Finally, it follows from (\ref{coh_ruled})  that  $\Ho^2\mathcal{O}_X(\xi_b)=\Ho^2\mathcal{O}_X(-S+2bH)=0$.

Hence $\xi_b$ defines a non empty wall $W^{\xi_b}$ of type $(S, (b+1)SH-b^2H^2)$ and satisfies the $C-$condition.

ii) By Lemma \ref{misma_pared}, the wall $W^{\xi_b}$ is defined by $t\xi_b=-tS+2btH\in\Num(X)$ with $t\neq0$. If $\eta=t\xi_b$ defines a non emty wall of type $(S,(b+1)SH-b^2H^2)$, then $$[Z]=\frac{t^2(\xi_b)^2-S^2}{4}+(b+1)SH-b^2H^2 $$ is a codimension 2 locally complete intersection. Then $[Z]\cdot L\geq0$ for any $L\in\mathcal{C}_X$. Let us consider $L_0=2 S+\beta H$ with   $\beta\gg0$. Then $[Z]\cdot L_0\geq0$ which implies $$\frac{t^2-1}{4}c_1'+(b+1-t^2b)=(t^2-1)[\frac{(c_1')^2}{4}-b]+1\geq0.$$  Thus, since $\frac{c_1'}{4}-b\leq-1$ and $0\neq t$, we must have $|t|=1$.

Moreover, since $-\xi_b+S=2(S-bH)$, $(-\xi_b)^2=\xi_b^2$ and $-\xi_b\cdot L^2=0$, we can conclude from i) that $-\xi_b$ defines a non empty wall of type $(c_1,c_2)$.

Finally, since $b\geq c_1'+1$, we get $$\Ho^0 \left({\bigwedge}^2{\mathcal{N}}_{Z/ X}\otimes\mathcal{O}_X(-\xi_b)_{|Z}\right)\cong \Ho^0\left(\mathcal{O}_Z(c_1'+1-2b)\right)=0$$ and hence $-\xi_b$ does not satisfy the $C-$condition.
\end{proof}
\vspace{0.3cm}

\begin{notation} For any $l\in\mathbb{Z}$, denote by $\xi_l:=-S+2lH$. Given $b\geq c_1'+1$ an integer, let   $W^{\xi_b}$ be the wall defined by $\xi_b$ and  denote by $\mathcal{C}_b$ the chamber such that $W^{\xi_b}\cap \Closure(\mathcal{C}_b)\neq\emptyset$ and $\xi_b\cdot L^{2}<0$ for $L\in \mathcal{C}_b$ and  by $\mathcal{C}'_b$ the chamber such that $W^{\xi_b}\cap \Closure(\mathcal{C}'_b)\neq\emptyset$ and $\xi_b\cdot L^{2}>0$ for $L\in \mathcal{C}'_b$ . 
\end{notation}

Let us start by studying the non emptiness of  $M_{\mathcal{C}_b}(2;S,(b+1)SH-b^2H^2)$.

 \begin{theorem}\label{th_irreducible_comp}Let $X=\mathbb{P}(\mathcal{E})\rightarrow\mathbb{P}^2$ be a ruled  $3$-fold  over $\mathbb{P}^2$ with $c'_i=c_i(\mathcal{E})$.  Let us fix $c_1'+1\leq b\in\mathbb{Z}$ and Chern classes $c_1=S$ and $c_2=(b+1)SH-b^2H^2$. 
 Then,  $$M_{\mathcal{C}_b}(2;c_1,c_2)\neq\emptyset$$ and it contains an irreducible component of dimension $2b+\h^0\mathcal{E}-\h^0\mathcal{E}(-1)-1$.
        \end{theorem}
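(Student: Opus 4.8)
The plan is to realize $M_{\mathcal{C}_b}(2;c_1,c_2)$ as containing the family $E_{\xi_b}(c_1,c_2)$ and then to compute the dimension of this family. By Lemma \ref{lemma_chib}, the class $\xi_b=-S+2bH$ defines a non empty wall $W^{\xi_b}$ of type $(S,(b+1)SH-b^2H^2)$ satisfying the $C$-condition, and by the \emph{Notation} above, $\mathcal{C}_b$ is precisely the chamber with $W^{\xi_b}\cap\Closure(\mathcal{C}_b)\neq\emptyset$ and $\xi_b\cdot L^2<0$ for $L\in\mathcal{C}_b$. Thus Corollary \ref{cor_inclusion} applies directly and yields an embedding $E_{\xi_b}(c_1,c_2)\hookrightarrow M_{\mathcal{C}_b}(2;c_1,c_2)$. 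In particular, the moduli space is non empty as soon as $E_{\xi_b}(c_1,c_2)\neq\emptyset$, and the latter is guaranteed by the $C$-condition together with the Hartshorne--Serre correspondence, since every such bundle arises from a non trivial extension
\begin{equation*}
0\rightarrow\mathcal{O}_X(G)\rightarrow E\rightarrow I_Z(c_1-G)\rightarrow0
\end{equation*}
with $\xi_b\equiv 2G-c_1$ (so $G=bH$) and $[Z]=SH$.

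Next I would compute the dimension of the irreducible component so obtained. Since the embedded family $E_{\xi_b}(c_1,c_2)$ parametrizes the bundles $E$ up to the choices involved in the construction, its dimension splits into two contributions: the dimension of the family of admissible cycles $Z$ with $[Z]=SH$, and the dimension of the space of non trivial extension classes, i.e.\ $\mathbb{P}(\Ext^1(I_Z(c_1-G),\mathcal{O}_X(G)))$, for fixed $Z$. For the first contribution, $Z$ ranges over codimension $2$ complete intersections of class $SH$; since $G=bH$ is fixed by $\xi_b$, the class $[Z]=SH$ is rigid in its numerical type, but $Z$ itself moves in a linear system determined by $\mathcal{O}_X(S)$ and $\mathcal{O}_X(H)$, whose dimension one reads off from $\h^0\mathcal{O}_X(S)=\h^0\mathcal{E}$ via the relation (\ref{coh_ruled}). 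For the second, I would twist the defining sequence by $\mathcal{O}_X(-G)$, use Serre duality and the vanishing ranges recorded after (\ref{coh_ruled}) to identify $\Ext^1$ with a concrete cohomology group on $X$, and hence on $\mathbb{P}^2$, expressible through $\h^0\mathcal{E}$ and $\h^0\mathcal{E}(-1)$.

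The bookkeeping should then assemble into $\dim E_{\xi_b}(c_1,c_2)=2b+\h^0\mathcal{E}-\h^0\mathcal{E}(-1)-1$, where the contribution $2b$ comes from the moduli of the cycle $Z$ (governed by $b$ through the twist $\mathcal{O}_X(2bH)$ entering $\bigwedge^2\mathcal{N}_{Z/X}\otimes\mathcal{O}_X(\xi_b)_{|Z}$, cf.\ the computation $\mathcal{O}_Z((2b+1)SH^2)$ in Lemma \ref{lemma_chib}), the term $\h^0\mathcal{E}-\h^0\mathcal{E}(-1)$ from the extension classes and the section $\mathcal{O}_X(G)\hookrightarrow E$, and the $-1$ from passing to projective classes of extensions (scaling the extension gives an isomorphic bundle, which is exactly the content of Lemma \ref{lemma_aut}, asserting $\Hom(\mathcal{O}_X(G),E)\cong K$). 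The injectivity from Corollary \ref{cor_inclusion} ensures that $E_{\xi_b}(c_1,c_2)$ sits inside the moduli space without collapsing, so its dimension equals that of the corresponding locus in $M_{\mathcal{C}_b}(2;c_1,c_2)$; irreducibility follows because the parameter space (a projective bundle over an irreducible base of cycles $Z$) is irreducible.

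The main obstacle I expect is the precise identification of the two dimension counts, and in particular ensuring that the embedded family is \emph{open} or at least dense in an irreducible component of the moduli space rather than a lower-dimensional subvariety. Controlling the cohomology groups via (\ref{coh_ruled}) is routine once the twists are pinned down, but verifying that the generic extension produces a \emph{stable} bundle with no further deformations outside the family, and that the parameter count matches the expected dimension of $M_{\mathcal{C}_b}$ (via $\ext^1(E,E)$), will require care; I would cross-check the answer by computing $\h^0(\End E)-\h^1(\End E)$ through Riemann--Roch on $X$ and confirming consistency with $2b+\h^0\mathcal{E}-\h^0\mathcal{E}(-1)-1$.
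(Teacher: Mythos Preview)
Your overall strategy matches the paper's: invoke Lemma \ref{lemma_chib} and Corollary \ref{cor_inclusion} to embed $E_{\xi_b}(c_1,c_2)$ into $M_{\mathcal{C}_b}(2;c_1,c_2)$, then compute $\dim E_{\xi_b}(c_1,c_2)$ as (dimension of the family of cycles $Z$) $+$ (dimension of the projectivized $\Ext^1$). That is exactly what the paper does.

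However, your attribution of the two summands is backwards, and this is not cosmetic: it would send the actual computation off the rails. You claim that the contribution $2b$ comes from the moduli of the cycle $Z$ and that $\h^0\mathcal{E}-\h^0\mathcal{E}(-1)$ comes from the extension classes. But observe that $[Z]=SH$ is \emph{independent of $b$}; the parameter space $\mathcal{L}$ of complete intersections of type $SH$ is built out of $|S|$ and $|H|$ only, and the paper computes $\dim\mathcal{L}=\h^0\mathcal{E}-\h^0\mathcal{E}(-1)-1$ via the Koszul presentation (\ref{eq_SH}) of $I_Z$ and a group-action count. All of the $b$-dependence sits in the other factor: one has
\[
\ext^1\!\bigl(I_Z(S-bH),\mathcal{O}_X(bH)\bigr)=\h^2 I_Z(S-2bH+K_X),
\]
and chasing the long exact sequence obtained by twisting (\ref{eq_SH}) by $\mathcal{O}_X(S-2bH+K_X)$ together with (\ref{coh_ruled}) gives $\binom{2b+2}{2}-\binom{2b+1}{2}=2b+1$. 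The final tally is then $(2b+1)+(\h^0\mathcal{E}-\h^0\mathcal{E}(-1)-1)-1$. Your heuristic linking $2b$ to $\mathcal{O}_Z((2b+1)SH^2)$ from Lemma \ref{lemma_chib} is suggestive but misplaced: that group witnesses the $C$-condition (hence non-emptiness), not the cycle moduli. If you try to extract a $b$-dependent count from the linear systems cutting out $Z$ you will find none, so the computation as you have outlined it cannot close.
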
     
        \begin{proof}
        By Lemma \ref{lemma_chib}, $\xi_b=-S+2bH$ defines the non empty wall $W^{\xi_b}$ of type $(c_1,c_2)$ and satisfies the $C-$condition. Thus, we can consider the family $E_{\xi_b}(c_1,c_2)$ of vector bundles given by a non trivial extension of type 
        \begin{equation}
        \label{echi}
         0\rightarrow\mathcal{O}_X(bH)\rightarrow E\rightarrow I_{[SH]}(S-bH)\rightarrow0.   
        \end{equation} Moreover, by Corollary \ref{cor_inclusion}, $E_{\xi_b}(c_1,c_2)$ is an irreducible component of the moduli space $M_{\mathcal{C}_b}(2;c_1,c_2)$. Since any $E\in E_{\xi}(c_1,c_2)$ is given by an extension of type (\ref{echi}), we get $$\dim{E_{\xi}}(c_1,c_2)=\ext^1 (I_Z(S-bH),\mathcal{O}_X(bH))+\dim\mathcal{L}-1,$$ where $\mathcal{L}$ parametrizes the codimension $2$ complete intersections of type $[Z]=SH$. 

Let us first compute $\ext^1 (I_{Z}(S-bH),\mathcal{O}_X(bH))$. Notice that $$\ext^1 (I_{Z}(S-bH),\mathcal{O}_X(bH))=\ext^2(\mathcal{O}_X(bH), I_Z(S-bH+K_X))=\h^2I_Z(S-2bH+K_X).$$

If we consider the exact sequence 
\begin{equation}
\label{eq_SH}
  0\rightarrow\mathcal{O}_X(-S-H)\rightarrow\mathcal{O}_X(-S)\oplus\mathcal{O}_X(-H)\rightarrow I_Z\rightarrow0  
\end{equation}

 twisted by $\mathcal{O}_X(S-2bH+K_X)$ and we take cohomology, we get 
\begin{equation}
\begin{array}{ll}
\label{suc_coh2}
\cdots &\rightarrow \Ho^2\mathcal{O}_X(-2bH+K_X) \oplus \Ho^2\mathcal{O}_X(S-(2b+1)H+K_X) \rightarrow \Ho^2I_Z(S-2bH+K_X) \rightarrow \\
&\rightarrow \Ho^{3}\mathcal{O}_X(-(2b+1)H+K_X) \rightarrow \Ho^{3}\mathcal{O}_X(-2bH+K_X) \oplus \Ho^{3}\mathcal{O}_X(S-(2b+1)H+K_X) \rightarrow \\
&\rightarrow \Ho^3I_Z(S-2bH+K_X) \rightarrow 0\\
\end{array}
\end{equation}

Notice that we have
\[
\begin{array}{llll}
    \h^2\mathcal{O}_X(-2bH+K_X) &= \h^1\mathcal{O}_X(2bH) &= \h^1\mathcal{O}_{\mathbb{P}^2}(2b) &= 0, \\
    \h^{3}\mathcal{O}_X(-(2b+1)H+K_X) &= \h^0\mathcal{O}_X((2b+1)H) &= \h^0\mathcal{O}_{\mathbb{P}^2}(2b+1) &= \binom{2b+2}{2}, \\
    \h^{3}\mathcal{O}_X(-2bH+K_X) &= \h^0\mathcal{O}_X(2bH) &= \h^0\mathcal{O}_{\mathbb{P}^2}(2b) &= \binom{2b+1}{2}, \\
    \h^2\mathcal{O}_X(S-(2b+1)H+K_X) &= 0, & & \\
    \h^{3}\mathcal{O}_X(S-(2b+1)H+K_X) &=0,&  & \\
    \h^{3}I_Z(S-2bH+K_X) &=\h^{3}\mathcal{O}_X(S-2bH+K_X)&= 0.  & \\
\end{array}
\]

Therefore, by (\ref{suc_coh2}) we obtain 

$\begin{array}{l}
\label{suc_coh}
0 \rightarrow \Ho^2I_Z(S-2bH+K_X) 
\rightarrow \Ho^{3}\mathcal{O}_X(-(2b+1)H+K_X) \rightarrow \Ho^{3}\mathcal{O}_X(-2bH+K_X) \rightarrow 0\\
\end{array}$

and thus 
\[
\begin{array}{rcl}
\h^2I_Z(S-2bH+K_X) &=& \h^{3}\mathcal{O}_X(-(2b+1)H+K_X) - \h^{3}\mathcal{O}_X(-2bH+K_X) \\
&=& \binom{2b+2}{2} - \binom{2b+1}{2} \\
&=& 2b+1.
\end{array}
\]

Hence,   $E_{\xi_b}(c_1,c_2)\neq\emptyset$, which implies   $M_{\mathcal{C}}(2;c_1,c_2)\neq\emptyset$.
Finally, again using (\ref{eq_SH}), 
we get 
\[
\begin{array}{lll}
\dim\mathcal{L} &= \dim(\Hom(\mathcal{O}_X(-S-H),\mathcal{O}_X(-S)\oplus\mathcal{O}_X(-H)))-\dim\Aut(\mathcal{O}_X(-S-H))\\
&-\dim\Aut(\mathcal{O}_X(-S)\oplus \mathcal{O}_X(-H))+\dim I_f \\
&=  \h^0\mathcal{E}-\h^0\mathcal{E}(-1)-1,
\end{array}
\]
where $f\in\Hom(\mathcal{O}_X(-S-H),\mathcal{O}_X(-S)\oplus\mathcal{O}_X(-H))$ is a general element and $I_f$ denotes its isotropy group under the action of $\Aut(\mathcal{O}_X(-S-H))\times\Aut(\mathcal{O}_X(-S)\oplus\mathcal{O}_X(-H))$. Putting altogether, we obtain

$\begin{array}{rrl}
 \dim E_{\xi_b}&=&\ext^1 (I_Z(S-bH),\mathcal{O}_X(bH))+\dim\mathcal{L}-1    \\ &=&2b+1+\h^0\mathcal{E}-\h^0\mathcal{E}(-1)-1-1\\ 
     & =& 2b+\h^0\mathcal{E}-\h^0\mathcal{E}(-1)-1.
\end{array}$
 \end{proof}

The above theorem  also allows us to study the Brill-Noether locus of the moduli space $M_{\mathcal{C}_b}(2;S,(b+1)SH-b^2H^2)$. Recall that, in general,  the Brill-Noether locus $W_L^k(2;c_1,c_2)$ of the moduli space $ M_L(2;c_1,c_2)$ is the subvariety which support is the set of $L$-stable bundles $E\in M_L(2;c_1,c_2)$ such that $\h^0E\geq k$.  
  The results in \cite{higherBN} allow us to guarantee the existence of these loci for  varieties of arbitrary dimension. 

Following the notation in \cite{nuestro_ruled}, if $\mathcal{C}$ is a chamber, we denote by $W_\mathcal{C}^k(2;c_1,c_2)$ the Brill-Noether locus $W_L^k(2;c_1,c_2)$ for some $L\in \mathcal{C}$.

\begin{theorem}
\label{BN}
    Let $X=\mathbb{P}(\mathcal{E})\rightarrow\mathbb{P}^2$ be a ruled $3$-fold over $\mathbb{P}^2$ with $c'_i=c_i(\mathcal{E})$.  Let us fix $c_1'+1\leq b\in\mathbb{Z}$ and Chern classes $c_1=S$ and $c_2=(b+1)SH-b^2H^2$. 
    Then, $$W_{\mathcal{C}_b}^k:=W_{\mathcal{C}_b}^k(2;c_1,c_2)\neq\emptyset$$ for $k$ in range $1\leq k\leq d:=\binom{b+2}{2}$.
\end{theorem}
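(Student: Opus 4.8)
The plan is to produce the required stable bundles directly from the family $E_{\xi_b}(c_1,c_2)$ built in the previous theorem, and to read off their sections from the destabilizing sub-line-bundle rather than computing $\h^0$ of the whole bundle. By Theorem \ref{th_irreducible_comp}, the class $\xi_b=-S+2bH$ defines the non-empty wall $W^{\xi_b}$ of type $(c_1,c_2)$ and satisfies the $C$-condition, so $E_{\xi_b}(c_1,c_2)$ is a non-empty irreducible component of $M_{\mathcal{C}_b}(2;c_1,c_2)$, and every member $E$ of it is a $\mathcal{C}_b$-stable rank $2$ bundle sitting in a non-trivial extension
\begin{equation*}
0\rightarrow \mathcal{O}_X(bH)\rightarrow E\rightarrow I_{[SH]}(S-bH)\rightarrow 0.
\end{equation*}

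First I would observe that the inclusion $\mathcal{O}_X(bH)\hookrightarrow E$ appearing in this sequence is an injection of sheaves, hence, since $\Ho^0=\Hom(\mathcal{O}_X,-)$ is left exact, it induces an injection $\Ho^0(\mathcal{O}_X(bH))\hookrightarrow \Ho^0(E)$ on global sections. Consequently $\h^0(E)\geq \h^0(\mathcal{O}_X(bH))$ for \emph{every} bundle in the family, with no need to analyze the contribution of the quotient $I_{[SH]}(S-bH)$.

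Second I would compute $\h^0(\mathcal{O}_X(bH))$ using the cohomology relation (\ref{coh_ruled}). Writing $bH=0\cdot S+bH$, the coefficient of $S$ is $a=0\geq 0$, so (\ref{coh_ruled}) gives $\Ho^0(\mathcal{O}_X(bH))=\Ho^0(\mathbb{P}^2, S^0(\mathcal{E})\otimes\mathcal{O}_{\mathbb{P}^2}(b))=\Ho^0(\mathbb{P}^2,\mathcal{O}_{\mathbb{P}^2}(b))$, whence $\h^0(\mathcal{O}_X(bH))=\binom{b+2}{2}=d$. Combining the two steps, every $E\in E_{\xi_b}(c_1,c_2)$ satisfies $\h^0(E)\geq d\geq k$ for all $1\leq k\leq d$, so each such $E$ lies in $W_{\mathcal{C}_b}^k(2;c_1,c_2)$; since $E_{\xi_b}(c_1,c_2)$ is non-empty and embedded in $M_{\mathcal{C}_b}(2;c_1,c_2)$ by Corollary \ref{cor_inclusion}, the locus $W_{\mathcal{C}_b}^k$ is non-empty.

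I do not expect a genuine computational obstacle here: the whole point is the recognition that the sub-line-bundle $\mathcal{O}_X(bH)$ already carries exactly $d$ independent sections, so the Brill--Noether condition $\h^0(E)\geq k$ is automatic throughout the component for $k$ up to $d$. The only delicate feature is the upper bound on the range: to guarantee sections beyond $d$ one would have to extract them from $\Ho^0(I_{[SH]}(S-bH))$ via the long exact sequence in cohomology, and controlling that term (and its compatibility with stability) is precisely where the argument would cease to be automatic — which explains why the statement is confined to $1\leq k\leq d=\binom{b+2}{2}$.
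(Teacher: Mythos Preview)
Your proof is correct and follows essentially the same approach as the paper: both use Theorem \ref{th_irreducible_comp} to produce a $\mathcal{C}_b$-stable bundle $E$ from $E_{\xi_b}(c_1,c_2)$, then read off $\h^0(E)\geq \h^0\mathcal{O}_X(bH)=\binom{b+2}{2}$ from the sub-line-bundle in the defining extension. The only cosmetic difference is that the paper first reduces to $k=d$ via the inclusion $W_{\mathcal{C}_b}^k\supseteq W_{\mathcal{C}_b}^d$, whereas you handle all $k$ at once; the argument is otherwise identical.
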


\begin{proof} 
  First of all, since $W_{\mathcal{C}_b}^{k}\supseteq W_{\mathcal{C}_b}^d$ for  $1\leq k\leq d$, it is enough to prove the non emptiness of $W_{\mathcal{C}_b}^d(2;c_1,c_2)$.
  
By Theorem \ref{th_irreducible_comp}, the non empty family $ E_{\xi_b}(c_1,c_2)$ consists of $\mathcal{C}_b-$stable bundles given by a non trivial extension of type $$0\rightarrow \mathcal{O}_X(bH)\rightarrow E\rightarrow I_{[SH]}(S-bH)\rightarrow0.$$ Taking cohomology, we have $\Ho^0\mathcal{O}_X(bH)\subseteq \Ho^0E$, which implies $$\h^0E\geq \h^0\mathcal{O}_X(bH)=\h^0\mathcal{O}_{\mathbb{P}^2}(b)=\binom{b+2}{2}$$ and hence $E\in W_{\mathcal{C}_b}^k$.
\end{proof}

Let us now obtain a decomposition of $M_{\mathcal{C}_b}(2;S, (b+1)SH-b^2H^2)$ in terms of the moduli $M_{\mathcal{C}'_b}(2;S, (b+1)SH-b^2H^2)$ and the family $E_{\xi_b}(S, (b+1)SH-b^2H^2)$, being $\mathcal{C}'_b$  the chamber sharing the wall $W^{\xi_b}$ with $\mathcal{C}_b$.
Before stating our results, let us study the chambers separated by the wall $W^{\xi_b}$.

\begin{proposition}
\label{limit_wall}
 Let  $c_1'+1\leq b\in\mathbb{Z}$ and Chern classes $c_1=S$ and $c_2=(b+1)SH-b^2H^2$.     
 Then:
 \begin{itemize}
     \item[1)]
 There is no wall  of type $(c_1,c_2)$ under $W^{\xi_b}$ and above the non empty wall of type $(c_1,c_2)$ defined by $\xi_{b+1}$.
     \item[2)] 
 There is no wall  of type $(c_1,c_2)$ between $W^{\xi_b}$ and $W^{\xi_{b-1}}$, where $$W^{\xi_{b-1}}=\{L\in(\Num(X)\otimes\mathbb{R})\cap\mathcal{C}_X|\thickspace \xi_{b-1}\cdot L^2=0\}.$$ 
 \end{itemize}
 
\end{proposition}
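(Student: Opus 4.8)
The plan is to show that any class $\xi=aS+dH$ defining a nonempty wall of type $(c_1,c_2)$ whose ray falls in the prescribed region is forced, by effectivity of the associated cycle $Z$, to coincide with one of the boundary classes, so that the open region contains no wall.

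First I would record what a wall class must satisfy. Divisibility of $\mathcal{O}_X(\xi+c_1)=\mathcal{O}_X((a+1)S+dH)$ by $2$ forces $a$ odd and $d$ even, say $d=2m$; and since $dH\cdot L^2=d(\alpha^2c_1'+2\alpha\beta)\neq0$ for $L=\alpha S+\beta H$ in the Kähler cone, every wall has $a\neq0$. By Lemma \ref{misma_pared} a wall is determined by its class up to an integer multiple, and replacing $\xi$ by $-\xi$ changes neither the wall nor the cycle class $[Z]=c_2+\tfrac{\xi^2-c_1^2}{4}$; hence I may assume $a=-|a|<0$ with $|a|$ odd.

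Next I would convert the position of the wall into an arithmetic condition on $(a,m)$. Writing $L=S+tH$, the equation $\xi\cdot L^2=0$ reduces, after dividing by $a$, to $t^2+2(c_1'+r)t+((c_1')^2-c_2'+rc_1')=0$ with $r:=d/a$, whose unique nonnegative root is $t_\xi(r)=-(c_1'+r)+\sqrt{r^2+rc_1'+c_2'}$ (the other root is $\le0$, since the product of the roots is $\le0$ in the relevant range of $r$). A short computation shows $r\mapsto t_\xi(r)$ is strictly decreasing for $r<0$, and $W^{\xi_l}$ corresponds to $r=-2l$. Thus a wall lies strictly between $W^{\xi_b}$ and $W^{\xi_{b+1}}$ exactly when $-2(b+1)<r<-2b$, i.e. $b|a|<m<(b+1)|a|$, and strictly between $W^{\xi_{b-1}}$ and $W^{\xi_b}$ exactly when $(b-1)|a|<m<b|a|$. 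For $|a|=1$ each of these open intervals contains no integer, which already excludes intermediate walls on the line $a=-1$; the remaining task is the values $|a|\ge3$.

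Finally I would invoke effectivity. Because $H=\pi^*h$ is nef and $Z$ is an effective codimension-two cycle, $[Z]\cdot H\ge0$; a direct computation gives $[Z]\cdot H=(b+1)+\tfrac{(a^2-1)c_1'}{4}-|a|m$, which is strictly decreasing in $m$. Evaluating at the smallest admissible integer $m$ in each interval and using $b\ge c_1'+1$ (so that $\tfrac{c_1'}{4}-b\le-1$ and $\tfrac{c_1'}{4}-(b-1)\le0$) yields, for $|a|\ge3$, the bounds $[Z]\cdot H\le 2-a^2-|a|<0$ in the first region and $[Z]\cdot H\le 2-|a|<0$ in the second, contradicting $[Z]\cdot H\ge0$. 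Hence no wall with $|a|\ge3$ meets either region, and together with the $|a|=1$ case this proves both statements. The main obstacle is precisely the case $|a|\ge3$: one must combine the position inequality with the nef intersection number $[Z]\cdot H$ in such a way that the hypothesis $b\ge c_1'+1$ forces that degree to turn negative; the rest is bookkeeping.
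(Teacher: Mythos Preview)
Your argument is correct and rests on the same key idea as the paper's proof: the effectivity of the cycle $[Z]$ forces $[Z]\cdot H\ge 0$, and for $|a|\ge 3$ this contradicts the positional constraint $b|a|<m<(b+1)|a|$ (resp.\ $(b-1)|a|<m<b|a|$). Your numerical inequality $[Z]\cdot H=\tfrac{(a^2-1)c_1'}{4}+b+1-|a|m$ is exactly what the paper obtains in its Case~5 as the leading coefficient of $[Z]\cdot L_0$ with $L_0=S+\beta H$, $\beta\gg 0$; intersecting with the nef class $H$ directly, as you do, is the cleaner way to extract it.

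The organizational difference is that the paper runs through five separate cases on the shape of $\eta$ (Cases~1--4 locate walls with small $|a|$ or small $p$ relative to $W^{\xi_b}$ by direct comparison, and Case~5 handles the remaining range via effectivity), whereas you collapse everything into a single monotone parametrization by the ratio $r=d/a$ and then split only on $|a|=1$ versus $|a|\ge 3$. Your route is shorter and treats parts 1) and 2) uniformly; the paper's case analysis is more explicit but somewhat redundant. One small point you skip that the paper includes: verifying at the outset that $\xi_{b+1}$ itself defines a nonempty wall of type $(c_1,c_2)$ (the class $[Z]=2(b+1)H^2$ is a locally complete intersection and the wall meets $\mathcal C_X$). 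This is routine, but since the statement asserts it, it is worth one line.
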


\begin{proof}
1) First of all, since
     $\xi_{b+1}+S=2(b+1)H$, 
         $$[Z]=\frac{{\xi^2_{b+1}}-S^2}{4}+(b+1)SH-b^2H^2=2(b+1)H^2$$ is a locally complete intersection.
In addition,  $$L= S+\left(2(b+1)-c_1'+\sqrt{2(b+1)\cdot[2(b+1)-c_1']+c_2'}\right)H\in (\Num(X)\otimes\mathbb{R})\cap \mathcal{C}_X$$ satisfies $\xi_{b+1}\cdot L^2=0$. Hence $\xi_{b+1}$ defines a non empty wall $W^{\xi_{b+1}}$ of type $(c_1,c_2)$.

    Moreover, if $L\in W^{\xi_b}$, we have that $0= \xi_{b}\cdot L^2=(\xi_{b+1}-2H)\cdot L^2$. Thus, $\xi_{b+1}\cdot L^2=2H\cdot L^2>0$, which implies that the wall $W^{\xi_{b+1}}$ is under the wall $W^{\xi_{b}}$.

   Let us now prove that there is no wall of type $(c_1,c_2)$ under $W^{\xi_b}$ and above $W^{\xi_{b+1}}$.
   To this end, we will study the possible cases of walls $W^{\eta}$ of type $(c_1,c_2)$.
   Notice that we can avoid the case when $\eta$ or $-\eta$ is effective, since it would not define a wall of type $(c_1,c_2)$.

   \noindent\textbf{Case 1:}
   Let us first study the case $\eta_p= -S+2pH$ with $p>b+1$.
   If $L\in W^{\eta_p}$, then $$\xi_{b+1}\cdot L^2=(\eta_p+2(b+1-p)H)\cdot L^2=2(b+1-p)H\cdot L^2<0,$$
   where the inequality follows from the fact that $H$ is effective and $b+1-p<0$.
    Therefore the wall $W^{\eta_p}$ lies under the wall $W^{\xi_{b+1}}$.

   \noindent\textbf{Case 2:}
   Let us now study the case $\eta_p= -S+2pH$ with $0<p<b$.
   If $L\in W^{\eta_p}$, then $$\xi_b\cdot L^2=(\eta_p+2(b-p)H)\cdot L^2=2(b-p)H\cdot L^2>0,$$

  \noindent where the inequality follows from the fact that $H$ is effective and $p<b$.
    Therefore the wall $W^\eta$ is  above the wall $W^{\xi_{b}}$.

\noindent\textbf{Case 3:}
   Let us now study the family $\eta_{p}= -(2p+1)S+2bH$ with $p\geq1$.

   If $L\in W^{\eta_p}$, then $$\xi_b\cdot L^2=(\eta_p+2S)\cdot L^2= 2pS\cdot L^2>0,$$
\noindent   where the inequality follows from the fact that $S$ is effective and $p\geq1$.
    Therefore the wall $W^{\eta_p}$ is  above the wall $W^{\xi_{b}}$.

\textbf{Case 4: }
   Let us now study the family $\eta_{(a,p)}= -(2a+1)S+2pH$ with $a\geq1$ and $0<p<b$. Then, by Cases 2 and 3,  the wall $\eta_{(a,p)}$ is above the wall $W^{\xi_b}$.
   
\textbf{Case 5:}
Let us now study the family of classes $\eta_{(a,p)}= -(2a+1)S+2pH$ with $a\geq1$ and $p>b$.
Let $L$ be such that $\eta_{(a,p)}\cdot L^2=0$ and assume that $L$ is between the walls $W^{\xi_b}$ and $W^{\xi_{b+1}}$, that is, $$\xi_b\cdot L^2<0<\xi_{b+1}\cdot L^2.$$

On one hand, since $\xi_b\cdot L^2<0=\eta_{(a,p)}\cdot L^2,$
we get 
\begin{equation}
\label{SL1}
S\cdot L^2<\frac{p-b}{a}H\cdot L^2.    
\end{equation}
On the other hand, since $\xi_{b+1}\cdot L^2>0=\eta_{(a,p)}\cdot L^2,$
we get 
\begin{equation}
    \label{SL2}
    S\cdot L^2>\frac{p-b-1}{a}H\cdot L^2.
\end{equation}
Finally, since $\eta_{(a,p)}\cdot L^2=0$ is equivalent to $S\cdot L^2=\frac{2p}{2a+1}H\cdot L^2$, by (\ref{SL1}) and (\ref{SL2}) we get $$\frac{p-b-1}{a}<\frac{2p}{2a+1}<\frac{p-b}{a},$$
which is equivalent to $$(2a+1)b<p<(2a+1)(b+1).$$

Therefore, we only have to study the family of $\eta_{(a,p)}=-(2a+1)S+2pH$ with $a\geq1$ and $(2a+1)b<p<(2a+1)(b+1).$

Let us prove that $\eta_{(a,p)}$ does not define a wall of type $(c_1,c_2)$.
In fact, if $\eta_{(a,p)}$ defines a wall of type $(c_1,c_2)$, then  $$[Z]=\frac{\eta_{(a,p)}^2-S^2}{4}+(b+1)SH-b^2H^2$$ is a locally complete interesection and thus $[Z]\cdot L\geq0$ for any $L\in\mathcal{C}_X$. In particular, if $L_0=S+\beta H$ with $\beta\gg0$, we have that $[Z]\cdot L_0\geq0$, which implies that $$a(a+1)c_1'+b+1-(2a+1)p\geq0.$$
Since, $p>(2a+1)b$, we get $$a(a+1)c_1'+b+1-(2a+1)^2b=a(a+1)(c_1'-4b)+1\geq0,$$ which is a contradiction since $a\geq1$ and $c_1'-4b\leq-1$.

2)The proof follows step by step the proof of  1) and we omit details.

\end{proof}

Let us now prove that we can always find polarizations in the chambers with common wall $W^{\xi_b}$. By Proposition \ref{limit_wall}, in order to get polarizations in $\mathcal{C}'_b$ and $\mathcal{C}_b$, it is enough to get polarizations between $W^{\xi_{b-1}}$ and $W^{\xi_b}$, and between $W^{\xi_{b}}$ and $W^{\xi_{b+1}}$, respectively.

\begin{proposition}
Let $X=\mathbb{P}(\mathcal{E})\rightarrow\mathbb{P}^2$ be a ruled $3$-fold over $\mathbb{P}^2$ with $c'_i=c_i(\mathcal{E})$.  Let $c_1+1\leq b\in\mathbb{Z}$ and Chern classes $c_1=S$ and $c_2=(b+1)SH-b^2H^2$.
    Then, $\Num(X)\cap \mathcal{C}_b\neq\emptyset$ and $\Num(X)\cap \mathcal{C}'_b\neq\emptyset$. 
    
\end{proposition}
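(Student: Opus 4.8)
The plan is to exploit that $\Num(X)\otimes\mathbb{R}\cong\mathbb{R}^2$, so the whole wall/chamber picture collapses to a one–parameter slope analysis. First I would record the shape of the walls. Writing a polarization as $L=\alpha S+\beta H$ with $\alpha>0$ and setting $t=\beta/\alpha$, the equation $\xi_l\cdot L^2=0$ becomes a quadratic in $t$ (this is exactly the computation carried out in the proof of Lemma \ref{lemma_chib}), which opens downwards and has unique positive root
\[
t_l:=2l-c_1'+\sqrt{2l(2l-c_1')+c_2'},
\]
its other root being non-positive (using $c_1',c_2'\geq0$ and $b>c_1'$). Hence each non-empty wall $W^{\xi_l}$ is a single ray through the origin, interior to $\mathcal{C}_X$, of slope $t_l$, and $\mathcal{C}_X\setminus\mathcal{W}(c_1,c_2)$ is a disjoint union of open angular sectors. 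Since $t_l$ is strictly increasing in $l$, the three walls $W^{\xi_{b-1}},W^{\xi_b},W^{\xi_{b+1}}$ (all non-empty by Lemma \ref{lemma_chib} and Proposition \ref{limit_wall}) are pairwise distinct rays with $0<t_{b-1}<t_b<t_{b+1}$, the positivity of $t_{b-1}$ coming from the hypothesis $c_1'+1\leq b$.

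Next I would identify the two chambers as wall-free sectors. Because the defining quadratic opens downwards with positive root $t_b$ and non-positive second root, within $\mathcal{C}_X$ one has $\xi_b\cdot L^2<0$ exactly when $\beta/\alpha>t_b$, and $\xi_b\cdot L^2>0$ exactly when $0<\beta/\alpha<t_b$. By Proposition \ref{limit_wall} there is no wall of type $(c_1,c_2)$ strictly between $W^{\xi_b}$ and $W^{\xi_{b+1}}$, nor between $W^{\xi_{b-1}}$ and $W^{\xi_b}$. Therefore the open sector $\Sigma:=\{\alpha S+\beta H:\alpha>0,\ t_b<\beta/\alpha<t_{b+1}\}$ is entirely wall-free, hence contained in a single connected component of $\mathcal{C}_X\setminus\mathcal{W}(c_1,c_2)$; since its points satisfy $\xi_b\cdot L^2<0$ and its closure meets $W^{\xi_b}$, that component is $\mathcal{C}_b$. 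Likewise $\Sigma':=\{\alpha S+\beta H:\alpha>0,\ t_{b-1}<\beta/\alpha<t_b\}$ is wall-free, consists of classes with $\xi_b\cdot L^2>0$, and its closure meets $W^{\xi_b}$, so $\Sigma'\subseteq\mathcal{C}'_b$.

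It then remains only to produce integral classes in $\Sigma$ and $\Sigma'$, which is elementary since both are non-degenerate open cones. For $\mathcal{C}_b$, once an integer $\alpha\geq1$ is chosen large enough that $\alpha(t_{b+1}-t_b)>1$, the open interval $(\alpha t_b,\alpha t_{b+1})$ has length greater than $1$ and hence contains an integer $\beta$; then $\alpha S+\beta H\in\Num(X)\cap\Sigma\subseteq\Num(X)\cap\mathcal{C}_b$, and it is automatically ample as it lies in a chamber. Running the same argument with the interval $(\alpha t_{b-1},\alpha t_b)$ yields an integral class in $\mathcal{C}'_b$, proving $\Num(X)\cap\mathcal{C}_b\neq\emptyset$ and $\Num(X)\cap\mathcal{C}'_b\neq\emptyset$.

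I expect the only delicate points to be bookkeeping rather than substantive: verifying that the three relevant walls are distinct rays lying in the interior of $\mathcal{C}_X$ (so the sectors are genuinely two-dimensional), and fixing the orientation correctly so that the two wall-free sectors are attached to $\mathcal{C}_b$ and $\mathcal{C}'_b$ rather than to their neighbours. Once Proposition \ref{limit_wall} is invoked to guarantee the absence of intermediate walls, the existence of lattice points inside an open angular sector is immediate.
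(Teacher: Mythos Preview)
Your argument is correct and structurally parallel to the paper's: both reduce to Proposition \ref{limit_wall} to identify $\mathcal{C}_b$ and $\mathcal{C}'_b$ with the open slope-sectors $(t_b,t_{b+1})$ and $(t_{b-1},t_b)$, and then produce a lattice point in each. The genuine difference is in this last step. You let $\alpha$ grow until $\alpha(t_{b+1}-t_b)>1$ and invoke the pigeonhole principle on the interval $(\alpha t_b,\alpha t_{b+1})$; the paper instead fixes $\alpha=1$, writes $f(\beta)=\xi_b\cdot(S+\beta H)^2$ and $g(\beta)=\xi_{b-1}\cdot(S+\beta H)^2$ as explicit downward parabolas, locates the positive root $y_0$ of $g$, and checks by direct evaluation that $f(y_0+1)=2b+c_1'-1>0$ while $g(y_0+1)<0$, so an integer $\beta_0\in(y_0,y_0+1]$ already works. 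Your route is cleaner and avoids any computation, while the paper's extracts the sharper information that one may always take the polarization of the form $S+\beta H$ with $\alpha=1$; neither gains anything essential over the other for the statement as written.
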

\begin{proof}
    Let us first prove that $\Num(X)\cap \mathcal{C}'_b\neq\emptyset$. To this end, by Proposition \ref{limit_wall}, it is enogh to find a polarization $L$ verifying $\xi_{b-1}\cdot L^2<0<\xi_b\cdot L^2$.
    Consider $L=S+\beta H\in \Num(X)\cap\mathcal{C}_X$ 
    and define
   
    $\begin{array}{lll}
        f(\beta) :=&\xi_b\cdot L^2&=-\beta^2+(2b-c_1')\beta+2bc_1'-[(c_1')^2-c_2']  \\
        g(\beta):=& \xi_{b-1}\cdot L^2&=-\beta^2+[2(b-1)-c_1']\beta+2(b-1)c_1'-[(c_1')^2-c_2'] .
    \end{array}$

Notice that $g(\beta)=f(\beta)-2\beta-2c_1'$. If we denote by $A_1:=2b-c_1'$ and $A_0:=2bc_1'-[(c_1')^2-c_2']$, then $ f(\beta) =-\beta^2+A_1\beta+A_0$ and $g(\beta)=f(\beta)-2\beta-2c_1' $
    are two parabolas in the variable $\beta\in\mathbb{R}^+$.
Let us prove that there exists an integer $\beta_0>0$  such that $f(\beta_0)>0>g(\beta_0)$.
Notice that $g$ has the vertex on the point $$P=(\frac{A_1-2}{2},\frac{(A_1-2)^2}{4} +A_0-2c_1').$$ Since $b\geq c_1'+1$, $P$  belongs to $\mathbb{R}^+\times\mathbb{R}^+$, which implies that there exists a positive $y_0$ such that $g(y_0)=0$. 
Then, since $g$ is continuous an decreases in $[y_0,+\infty)$, we have $g(y_0+1)<0$.

Let us now check that $f(y_0+1)>0$.
Notice that $$f(y_0+1)=g(y_0+1)+2(y_0+1)+2c_1'=g(y_0)-1+A_1+2c_1'=2b+c_1'-1>0.$$
Since $f$ is continuos and $f(\beta)>g(\beta)$ for any $\beta$, we have that $f(\beta)>0$ for $\beta\in[y_0,y_0+1]$.
Putting altogether $f(\beta)>0>g(\beta)$ for any $\beta\in (y_0,y_0+1]$.
If $y_0$ is an integer, take $\beta_0=y_0+1$; otherwise, take $\beta_0=\lceil y_0\rceil\in(y_0,y_0+1) $.
Hence we have proved that $L=S+\beta_0H$ is a polarization in $\mathcal{C}'_b$.

Following the same argument, considering  $f(\beta):=\xi_{b+1}\cdot L^2$ and $g(\beta):=\xi_{b}\cdot L^2$ for $L=S+\beta H$, we prove that there exists a polarization in $\mathcal{C}_b$.

\end{proof}

\begin{theorem}
\label{prop_decom} Let $X=\mathbb{P}(\mathcal{E})\rightarrow\mathbb{P}^2$ be a ruled $3$-fold over $\mathbb{P}^2$ with $c'_i=c_i(\mathcal{E})$.  Let  $c_1'+1\leq b\in\mathbb{Z}$ and Chern classes $c_1=S$ and $c_2=(b+1)SH-b^2H^2$.
   Then, $$M_{\mathcal{C}_b}(2;c_1,c_2)=M_{\mathcal{C}'_b}(2;c_1,c_2)\sqcup E_{\xi_b}(c_1,c_2).$$ 
\end{theorem}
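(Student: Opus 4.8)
The plan is to follow the blueprint of \cite[Proposition 1.3.1]{Qin1}, obtaining the decomposition from three facts: that $E_{\xi_b}(c_1,c_2)$ lies inside $M_{\mathcal{C}_b}(2;c_1,c_2)$ and is disjoint from $M_{\mathcal{C}'_b}(2;c_1,c_2)$; that every bundle which is $\mathcal{C}_b$-stable but not $\mathcal{C}'_b$-stable belongs to $E_{\xi_b}(c_1,c_2)$; and finally the reverse inclusion $M_{\mathcal{C}'_b}(2;c_1,c_2)\subseteq M_{\mathcal{C}_b}(2;c_1,c_2)$. Throughout I would use that, by Proposition \ref{limit_wall}, the chambers $\mathcal{C}_b$ and $\mathcal{C}'_b$ are adjacent and $W^{\xi_b}$ is the \emph{only} wall of type $(c_1,c_2)$ separating them, and that by Lemma \ref{misma_pared} and Lemma \ref{lemma_chib} this wall is defined exactly by $\xi_b$ and $-\xi_b$.

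First I would dispose of $E_{\xi_b}(c_1,c_2)\subseteq M_{\mathcal{C}_b}(2;c_1,c_2)$, which is immediate from Corollary \ref{cor_inclusion}, since $\xi_b$ satisfies the $C$-condition and $\xi_b\cdot L^2<0$ for $L\in\mathcal{C}_b$. Disjointness $E_{\xi_b}(c_1,c_2)\cap M_{\mathcal{C}'_b}(2;c_1,c_2)=\emptyset$ follows from Theorem \ref{th_Exi}(i): for $L\in\mathcal{C}'_b$ one has $\xi_b\cdot L^2>0$, so every $E\in E_{\xi_b}(c_1,c_2)$ is $L$-unstable. For the inclusion $M_{\mathcal{C}_b}(2;c_1,c_2)\setminus M_{\mathcal{C}'_b}(2;c_1,c_2)\subseteq E_{\xi_b}(c_1,c_2)$, take $E$ that is $\mathcal{C}_b$-stable but $\mathcal{C}'_b$-unstable. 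By Lemma \ref{lemma_separar} there is a non-empty wall $W^{\eta}$, with $\eta\equiv 2G-c_1$ for a destabilizing $\mathcal{O}_X(G)\hookrightarrow E$, separating a polarization of $\mathcal{C}_b$ from one of $\mathcal{C}'_b$; by uniqueness of the separating wall this forces $W^{\eta}=W^{\xi_b}$, and since $\eta\cdot L^2<0$ for $L\in\mathcal{C}_b$ (where $\xi_b\cdot L^2<0$), together with $|t|=1$ from Lemma \ref{lemma_chib}, we get $\eta=\xi_b$, i.e.\ $G\equiv bH$. Thus $E$ is a non-trivial extension of $I_Z(S-bH)$ by $\mathcal{O}_X(bH)$ with $[Z]=SH$, which is exactly an element of $E_{\xi_b}(c_1,c_2)$. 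Combined with the previous two facts this yields $M_{\mathcal{C}_b}(2;c_1,c_2)\setminus M_{\mathcal{C}'_b}(2;c_1,c_2)=E_{\xi_b}(c_1,c_2)$.

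The heart of the argument, and the step I expect to be the main obstacle, is $M_{\mathcal{C}'_b}(2;c_1,c_2)\subseteq M_{\mathcal{C}_b}(2;c_1,c_2)$: one must rule out a bundle that is stable only on the $\mathcal{C}'_b$ side. Suppose $E$ is $\mathcal{C}'_b$-stable but $\mathcal{C}_b$-unstable. Arguing as above with the roles of the chambers exchanged, Lemma \ref{lemma_separar} produces a destabilizing $\mathcal{O}_X(G)\hookrightarrow E$ with $\eta\equiv 2G-c_1$ defining $W^{\xi_b}$; now the sign is reversed, $\eta\cdot L^2<0$ for $L\in\mathcal{C}'_b$ (where $\xi_b\cdot L^2>0$), so $\eta=-\xi_b$ and $G\equiv S-bH$. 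Hence $E$ would sit in a non-trivial extension
\begin{equation*}
0\rightarrow\mathcal{O}_X(S-bH)\rightarrow E\rightarrow I_{Z'}(bH)\rightarrow0,\qquad [Z']=SH.
\end{equation*}
Twisting by $\mathcal{O}_X(bH-S)$ realizes $E(bH-S)$ as a Serre extension of $I_{Z'}\otimes\mathcal{O}_X(\xi_b)$ by $\mathcal{O}_X$, whose local freeness along $Z'$ forces, via the Hartshorne–Serre correspondence, a nowhere-vanishing section of $\bigwedge^2\mathcal{N}_{Z'/X}\otimes\mathcal{O}_X(-\xi_b)|_{Z'}$. This is precisely the group appearing in the $C$-condition for $-\xi_b$, which vanishes by Lemma \ref{lemma_chib}(ii); here one uses that $[Z']=SH$ gives $Z'\cdot H=1$, so $Z'$ is a section of $\pi$ over a line and the computation of Lemma \ref{lemma_chib}(ii) applies to every admissible $Z'$. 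This contradiction shows no such $E$ exists, so every $\mathcal{C}'_b$-stable bundle is $\mathcal{C}_b$-stable.

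Putting the three facts together gives $M_{\mathcal{C}_b}(2;c_1,c_2)=M_{\mathcal{C}'_b}(2;c_1,c_2)\sqcup E_{\xi_b}(c_1,c_2)$. The delicate point is the passage from ``$-\xi_b$ fails the $C$-condition'' to ``no $\mathcal{C}'_b$-stable bundle degenerates'', which hinges on the vanishing of the relevant section space for \emph{every} admissible $Z'$ of class $SH$; this is exactly the ingredient that is unavailable for general higher-dimensional $X$ and which makes the clean surface-type decomposition possible in this case.
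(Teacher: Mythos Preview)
Your proposal is correct and follows essentially the same route as the paper's proof: both establish the decomposition via the same four ingredients (the embedding from Corollary \ref{cor_inclusion}, disjointness from Theorem \ref{th_Exi}(i), identification of the destabilizing class as $\pm\xi_b$ via Lemma \ref{lemma_separar} and Lemma \ref{lemma_chib}(ii), and the contradiction for $-\xi_b$ from the failure of the $C$-condition through Hartshorne--Serre). The only cosmetic difference is that you invoke Proposition \ref{limit_wall} to justify uniqueness of the separating wall, whereas the paper uses directly that $\mathcal{C}_b$ and $\mathcal{C}'_b$ share a common face in $W^{\xi_b}$ by definition; your extra care about the vanishing holding for \emph{every} admissible $Z'$ of class $SH$ is a welcome clarification that the paper leaves implicit.
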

\begin{proof}


    First of all, we will see that we have a disjoint union on the righthand side. 
    For any $E\in E_{\xi_b}(c_1,c_2)$,  $\mathcal{O}_X(bH)\hookrightarrow E$. On the other hand, for $L'\in \mathcal{C}'_b$, $\xi_b\cdot (L')^2>0$, which is equivalent to $(bH)\cdot (L')^2>\frac{c_1\cdot L^2}{2}$. Hence, $E$ is $L'$-unstable and then $E\not\in M_{\mathcal{C}'_b}(2;c_1,c_2)$.
    
      By Corollary \ref{cor_inclusion}, $E_{\xi_b}( c_1,c_2)\hookrightarrow M_{\mathcal{C}_b}(2;  c_1,c_2)$.
Let us now prove that $M_{\mathcal{C}'_b}(2;c_1,c_2)\subseteq M_{\mathcal{C}_b}(2;c_1,c_2).$
Let us take $E\in M_{\mathcal{C}'_b}(2;c_1,c_2)$.
  If $E\not \in M_{\mathcal{C}_b}(2;c_1,c_2)$, then $E$ is $L'$-stable for any $L'\in \mathcal{C}'_b$ but $L$-unstable for some $L\in \mathcal{C}_b$. 
  Hence, by Lemma \ref{lemma_separar}, there exists $\xi=2G-c_1\in\Num(X)$ for some $\mathcal{O}_X(G)\hookrightarrow E$ defining a non empty wall $W^{\xi}$ of type $(c_1,c_2)$ that separates $L\in\mathcal{C}_b$ and $L'\in \mathcal{C}'_b$. Since $\Closure(\mathcal{C}_b)\cap W^{\xi_b}\neq \emptyset$ and $\Closure(\mathcal{C}'_b)\cap W^{\xi_b}\neq \emptyset$, we get $W^{\xi_b}=W^{\xi}$.
    Finally, by Lemma \ref{lemma_chib} ii) and the fact that $\xi\cdot {L'}^2<0$, we get $\xi=-\xi_b$ and thus $E(-G)$ is a vector bundle that sits in the exact sequence $$0\rightarrow\mathcal{O}_X\rightarrow E(-G)\rightarrow I_Z(\xi_b)\rightarrow0.$$ This implies that $\Ho^0\left({\mathcal{N}_{Z/X}}\otimes\mathcal{O}_X(-\xi_b)_{|Z}\right)\neq0$, which is a contradiction since by the proof of Lemma \ref{lemma_chib} (ii) this cohomology group vanishes. Hence $E\in M_{\mathcal{C}_b}(2;c_1,c_2)$, which proves $M_{\mathcal{C}_b'}(2;c_1,c_2)\subseteq M_{\mathcal{C}_b}(2;c_1,c_2)$.
   
Let us now prove the other  inclusion. We follow the same steps as before. If we assume that $E\in M_{\mathcal{C}_b}(2;c_1,c_2)$ is $\mathcal{C}_b'$-unstable, we get that $E$
sits in the exact sequence $$0\rightarrow\mathcal{O}_X(G)\rightarrow E\rightarrow I_Z(S-G)\rightarrow0,$$ where $[Z]=\frac{\xi^2-{c_1}^2}{4}+c_2$ for $\xi=2G-c_1$. 
Moreover, we can check that $\xi$ defines a non-empty wall of type $(c_1,c_2)$ separating the chambers $\mathcal{C}_b$ and $\mathcal{C}'_b$. Thus, since $\mathcal{C}_b$ and $\mathcal{C}'_b$ are only separated by the wall $W^{\xi_b}$,  we conclude by Lemma \ref{lemma_chib} that $\xi=\xi_b$.
 Finally, since $\xi_b$ defines the non empty wall $W^{\xi_b}$ and satisfies the $C-$condition, we get by definition that $E\in E_{\xi_b}(c_1,c_2)$
 and thus $$M_{\mathcal{C}_b}(2;c_1,c_2)\backslash M_{\mathcal{C}'_b}(2;c_1,c_2) \subseteq E_{\xi_b}(c_1,c_2),$$ which is equivalent to $M_{\mathcal{C}_b}(2;c_1,c_2)\subseteq M_{\mathcal{C}'_b}(2;c_1,c_2) \sqcup E_{\xi_b}(c_1,c_2).$

\noindent Finally, both inclusions imply $M_{\mathcal{C}_b}(2;c_1,c_2)= M_{\mathcal{C}'_b}(2;c_1,c_2) \sqcup E_{\xi_b}(c_1,c_2).$
\end{proof}

\begin{remark}
If we assume that there are polarizations in the face $\mathcal{F}_b\subseteq W^{\xi_b}$, this is $\mathcal{F}_b\cap\Num(X)\neq\emptyset$, then  we get $$M_{\mathcal{C}_b}(2;c_1,c_2)=M_{\mathcal{F}_b}(2;c_1,c_2)\sqcup E_{\xi_b}(c_1,c_2).$$
Notice that, if  $E\in M_{\mathcal{C}_b}(2;c_1,c_2)\backslash  M_{\mathcal{F}_b}(2;c_1,c_2)$, then by Remark \ref{M_cara} $E$ is $\mathcal{C}'_b$-unstable. Therefore, following analogues arguments as in the proof of Theorem \ref{prop_decom}, we get $E\in E_{\xi_b}(c_1,c_2)$. The fact that $M_{\mathcal{F}_b}(2;c_1,c_2)\cap E_{\xi_b}(c_1,c_2)=\emptyset$ and the  inclusion $M_{\mathcal{F}_b}(2;c_1,c_2)\sqcup E_{\xi_b}(c_1,c_2)\subseteq M_{\mathcal{C}_b}(2;c_1,c_2)$ follow from Remark \ref{M_cara} and  Corollary \ref{cor_inclusion}.
\end{remark}

Let us now see that if we cross the wall $W^{\xi_{b+1}}$ we get the emptiness of the moduli space.

\begin{theorem}
\label{th_vacio1}
    Let $X=\mathbb{P}(\mathcal{E})\rightarrow\mathbb{P}^2$ be a ruled $3$-fold  over $\mathbb{P}^2$ with $c'_i=c_i(\mathcal{E})$.  Let  $c_1'+1\leq b\in\mathbb{Z}$ and Chern classes $c_1=S$ and $c_2=(b+1)SH-b^2H^2$.  If $L$ is a polarization lying on $W^{\xi_{b+1}}$ or under this wall, then $M_{L}(2;c_1,c_2)=\emptyset$.
\end{theorem}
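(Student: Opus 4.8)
The plan is to reduce the statement to the single ``lowest'' chamber and then to destabilize every candidate bundle by its relative Harder--Narasimhan sub-line-bundle. First I would record that $W^{\xi_{b+1}}$ is the last wall in this direction: arguing exactly as in Proposition \ref{limit_wall}, no class defines a wall of type $(c_1,c_2)$ strictly below $W^{\xi_{b+1}}$. Indeed, for $\xi_p=-S+2pH$ with $p\geq b+2$ one has $[Z]\cdot H=b+1-p<0$, so $[Z]$ is not effective and no wall arises; and for a class $-(2a+1)S+2pH$ with $a\geq1$ lying below $W^{\xi_{b+1}}$ one is forced, as in Case 5, to $p>(2a+1)(b+1)$, whence $[Z]\cdot H<a(a+1)\bigl(c_1'-4(b+1)\bigr)<0$, again impossible. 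Consequently all polarizations strictly below $W^{\xi_{b+1}}$ lie in a single chamber $\mathcal{C}''$, so it suffices to prove $M_{\mathcal{C}''}(2;c_1,c_2)=\emptyset$; the boundary case $L\in W^{\xi_{b+1}}$ will follow because there the destabilizing quotient produced below only yields an equality of slopes, i.e. strict semistability.

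Next I would take a rank $2$ bundle $E$ with $c_1(E)=S$ and assume, for contradiction, that it is $L$-stable for some $L$ on or below $W^{\xi_{b+1}}$. Restricting to a general fibre $F\cong\mathbb{P}^1$ of $\pi$, the degree $c_1(E)\cdot F=S\cdot H^2=1$ is odd, so $E|_F\cong\mathcal{O}_F(a)\oplus\mathcal{O}_F(1-a)$ with $a\geq1$ is unstable on every fibre. The relative Harder--Narasimhan filtration of $E$ with respect to $\pi$ therefore produces a saturated sub-line-bundle $M=\mathcal{O}_X(aS+cH)\hookrightarrow E$, with $E/M$ torsion free and generic fibre degree $a\geq1$, for a suitable $c\in\mathbb{Z}$ (recall that the fibre degree of $\mathcal{O}_X(aS+cH)$ equals $a$, since $S|_F=\mathcal{O}_F(1)$ and $H|_F=\mathcal{O}_F$).

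Finally I would show that $M$ destabilizes $E$ throughout $\mathcal{C}''$, contradicting stability. Set $\eta:=2c_1(M)-c_1=(2a-1)S+2cH$; then $M$ violates stability of $E$ with respect to $L$ precisely when $\eta\cdot L^{2}\geq0$. Writing $L=\alpha S+\beta H$, the coefficient of $\beta^{2}$ in $\eta\cdot L^{2}$ is $\eta\cdot H^{2}=2a-1>0$, so $\eta\cdot L^{2}>0$ for polarizations deep below $W^{\xi_{b+1}}$. Since $\eta\equiv c_1\pmod 2$ and the Harder--Narasimhan quotient $E/M$ supplies an effective codimension $2$ locally complete intersection class $[W]=c_2+\tfrac{\eta^2-c_1^2}{4}$, the class $\eta$ is the class of a wall of type $(c_1,c_2)$; whenever $W^{\eta}$ is non-empty it cannot, by the first paragraph, lie strictly below $W^{\xi_{b+1}}$. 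Hence $\eta\cdot L^{2}>0$ on the whole of $\mathcal{C}''$ and $\eta\cdot L^{2}\geq0$ on $W^{\xi_{b+1}}$, so $\mu_L(M)\geq\mu_L(E)$ for every $L$ on or below $W^{\xi_{b+1}}$, contradicting the $L$-stability of $E$; therefore $M_L(2;c_1,c_2)=\emptyset$. The hard part is exactly this last upgrade: the relative Harder--Narasimhan sub-bundle only destabilizes $E$ asymptotically a priori, and one must invoke the absence of walls strictly below $W^{\xi_{b+1}}$ (equivalently, that $W^{\xi_{b+1}}$ is the last wall, which reflects the failure of the $C$-condition for $\xi_{b+1}$ already visible in the computation $\mathcal{O}_X(\xi_{b+1})|_F=\mathcal{O}_F(-1)$) to promote it to destabilization on the entire region.
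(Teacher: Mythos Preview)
Your proof is correct but organized differently from the paper's. You first extend Proposition~\ref{limit_wall} by showing that no wall of type $(c_1,c_2)$ lies strictly below $W^{\xi_{b+1}}$: for any candidate class $\eta'=-(2a'+1)S+2pH$ meeting that region one is forced to $p>(2a'+1)(b+1)$, whence the associated cycle class satisfies $[Z]\cdot H<a'(a'+1)(c_1'-4(b+1))<0$, violating effectivity. You then use this structural fact to promote the merely asymptotic destabilization by the relative Harder--Narasimhan sub-line-bundle $M$ to destabilization on the entire region below (and by continuity on) $W^{\xi_{b+1}}$.

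The paper instead works directly with the given polarization $L$. From $\xi_{b+1}\cdot L^2\le 0$ and $L$-stability of $E$ it extracts the bound $m<-(2l-1)(b+1)$ on the $H$-coefficient of the sub-line-bundle $\mathcal{O}_X(lS+mH)$, and then plays this off against the effectivity inequality $b+1+(2l-1)m+c_1'l(l-1)\ge 0$ obtained from $c_2(E(-lS-mH))\cdot(2S+kH)\ge 0$ for $k\gg0$. The two arguments manipulate the same pair of inequalities in opposite orders: you convert effectivity into a constraint on where walls can sit, while the paper converts stability at $L$ into a constraint on $m$. Your route makes the wall-crossing geometry explicit and yields the extra statement that everything below $W^{\xi_{b+1}}$ is a single chamber; the paper's is shorter and does not need that auxiliary fact.

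One small correction: your parenthetical linking the absence of walls below $W^{\xi_{b+1}}$ to the failure of the $C$-condition for $\xi_{b+1}$ conflates two separate phenomena. The absence of further walls comes from the \emph{effective-cycle} condition failing for classes that would sit below (their $[Z]$ has $[Z]\cdot H<0$). The $C$-condition, by contrast, governs whether one can build the family $E_{\xi_{b+1}}$ via Hartshorne--Serre. Your observation $\mathcal{O}_X(\xi_{b+1})|_F\cong\mathcal{O}_F(-1)$ is indeed relevant to the latter (it explains why the required $\Ho^0$ on $Z$ vanishes) but plays no role in the former.
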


\begin{proof}

First of all, notice that, for $L$  lying on $W^{\xi_{b+1}}$ or under this wall, we have $\xi_{b+1}\cdot L^2\leq0$, which is equivalent to 
\begin{equation}
\label{cota_debajo}
    2(b+1) H\cdot L^2\leq S\cdot L^2.
\end{equation}

    Let us assume that $M_{L}(2;c_1,c_2)\neq \emptyset$ and take $E\in M_{L}(2;c_1,c_2)$. 
If $F\cong\mathbb{P}^1$ is a general fiber of $\pi:\mathbb{P}(\mathcal{E})\rightarrow\mathbb{P}^2$, we get $E_{|F}\cong \mathcal{O}_{\mathbb{P}^1}(a)\oplus \mathcal{O}_{\mathbb{P}^1}(1-a)$, with $a>0$ some integer. Thus $\mathcal{O}_{\mathbb{P}^1}(a)\hookrightarrow E_{|F}$, which implies $\mathcal{O}_X(aS+tH)\hookrightarrow E$ for some $t\in\mathbb{Z}$. Hence, for some integers $l\geq1$ and $m$, we can find a section $s\in\Ho^0E(-lS-mH)$, which scheme of zeros has codimension $2$.
    Since $E$ is $L$-stable, $\mathcal{O}_X(lS+mH)\hookrightarrow E$ implies $(lS+mH)\cdot L^2<\frac{S\cdot L^2}{2}$, which is equivalent to 
    \begin{equation}
    \label{cota_lstable}
        2m H\cdot L^2<-(2l-1)S\cdot L^2.
    \end{equation}
By (\ref{cota_debajo}), $$-S\cdot L^2\leq -2(b+1)H\cdot L^2$$ and, since $l\geq1$ and $S\cdot L^2>0$, $$-(2l-1)S\cdot L^2\leq -2(2l-1)(b+1)H\cdot L^2,$$ which together with (\ref{cota_lstable}) implies $$2m H\cdot L^2<-(2l-1)S\cdot L^2\leq -2(2l-1)(b+1)H\cdot L^2.$$ Hence, since $H\cdot L^2>0$, we get   $$m<-(2l-1)(b+1).$$   Moreover, since $s\in\Ho^0E(-lS-mH)$  has an scheme of zeros of codimension $2$, $$c_2(E(-lS-mH))= (b+1+(2l-1)m)SH+(m^2-b^2)H^2+l(l-1)S^2$$ is a codimension $2$ locally complete intersection. In particular, if we consider $L_k=2S+k H$ with $k\gg0$, we have $c_2(E(-lS-mH))\cdot L_k\geq0$, which implies
    \begin{equation}
    \label{codim2}
        b+1+(2l-1)m+c_1'l(l-1)\geq0.
    \end{equation}
Finally, since $m<-(2l-1)(b+1)$, by (\ref{codim2}) we get $l(l-1)(c_1'-4b-4)\geq0$, a contradiction. Hence $M_{L}(2;c_1,c_2)=\emptyset$.
\end{proof}

We can also study the moduli space when considering  $c_2=bSH-b^2H^2$.
Notice that, for this case, since $[Z]=\frac{\xi_b^2-{c_1^2}}{4}+c_2=0$, $\xi_b=-S+2bH$ also defines a non empty wall $W^{\xi_b}$ of type $(S,bSH-b^2H^2)$ (the rest follows from Lemma \ref{lemma_chib} i)).

\begin{theorem}
       Let $X=\mathbb{P}(\mathcal{E})\rightarrow\mathbb{P}^2$ be a ruled $3$-fold over $\mathbb{P}^2$ with $c'_i=c_i(\mathcal{E})$.  Let  $c_1'+1\leq b\in\mathbb{Z}$ and Chern classes $c_1=S$ and $c_2=bSH-b^2H^2$.
   If $L$ is a polarization lying on the wall $W^{\xi_b}$ of type $(c_1,c_2)$ or below it, then $M_L(2;c_1,c_2)=\emptyset.$
\end{theorem}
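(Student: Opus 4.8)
The plan is to adapt the argument of Theorem~\ref{th_vacio1} almost verbatim, replacing the wall $W^{\xi_{b+1}}$ by $W^{\xi_b}$ and the shift $b+1$ by $b$ in all the numerical estimates. First I would translate the hypothesis on $L$: by definition, $L$ lying on $W^{\xi_b}$ or below it means $\xi_b\cdot L^2\leq 0$, that is,
\begin{equation*}
2b\,H\cdot L^2\leq S\cdot L^2.
\end{equation*}

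Arguing by contradiction, I would assume $M_L(2;c_1,c_2)\neq\emptyset$ and pick $E$ in it. Restricting $E$ to a general fiber $F\cong\mathbb{P}^1$ of $\pi$, and using $c_1(E)\cdot F=S\cdot F=1$, we obtain $E_{|F}\cong\mathcal{O}_{\mathbb{P}^1}(a)\oplus\mathcal{O}_{\mathbb{P}^1}(1-a)$ with $a\geq 1$; as in the proof of Theorem~\ref{th_vacio1}, this produces a sub-line bundle $\mathcal{O}_X(lS+mH)\hookrightarrow E$ with $l\geq 1$ together with a section of $E(-lS-mH)$ whose zero scheme has codimension $2$. Applying the $L$-stability of $E$ to this sub-line bundle gives $2m\,H\cdot L^2<-(2l-1)S\cdot L^2$, and combining this with the wall inequality above (together with $l\geq 1$ and $S\cdot L^2>0$) yields $m<-(2l-1)b$.

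Next I would compute the second Chern class of the twist,
\begin{equation*}
c_2\bigl(E(-lS-mH)\bigr)=l(l-1)S^2+\bigl(b+(2l-1)m\bigr)SH+(m^2-b^2)H^2,
\end{equation*}
which is an effective codimension-$2$ class and hence meets every ample divisor nonnegatively. Intersecting it with $L_k=2S+kH$ and letting $k\to\infty$, the coefficient of $k$ must be nonnegative, which gives
\begin{equation*}
b+(2l-1)m+c_1'\,l(l-1)\geq 0.
\end{equation*}
Substituting $m<-(2l-1)b$ (and $(2l-1)^2=4l(l-1)+1$) into the left-hand side produces $b+(2l-1)m+c_1'l(l-1)<l(l-1)(c_1'-4b)$; since $b\geq c_1'+1$ forces $c_1'-4b<0$ while $l(l-1)\geq 0$, the right-hand side is $\leq 0$, contradicting the displayed lower bound.

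Once the template of Theorem~\ref{th_vacio1} is in place the computation is routine; the two points needing care are the bookkeeping of the coefficients---here the $SH$-coefficient of $c_2$ is $b$ rather than $b+1$, which is precisely why the relevant wall is $W^{\xi_b}$---and the degenerate case $l=1$, where $l(l-1)=0$ and the contradiction comes directly from $m\geq -b$ (effectivity) versus $m<-b$ (stability). I expect the main obstacle, such as it is, to be the fiber-restriction step: it is the one genuinely geometric input (ensuring $l\geq1$ together with a codimension-$2$ zero scheme), and the entire numerical cascade hinges on it, so I would verify it carefully.
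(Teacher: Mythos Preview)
Your proposal is correct and follows essentially the same route as the paper's own proof, which the paper itself describes as analogous to Theorem~\ref{th_vacio1}. Your write-up is in fact slightly more careful: you explicitly isolate the degenerate case $l=1$, whereas the paper's concluding line ``$l(l-1)(c_1'-4b)\geq 0$, a contradiction'' leaves that boundary case implicit (it still works because the inequality is actually strict, giving $0>0$ when $l=1$).
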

\begin{proof}
The proof is analogous to the one in Theorem \ref{th_vacio1}.
By assumption, $\xi_{b}\cdot L^2\leq0$, which is equivalent to 
\begin{equation}
    2b H\cdot L^2\leq S\cdot L^2.
\end{equation}

    Let us assume that $M_{L}(2;c_1,c_2)\neq \emptyset$ and take $E\in M_{L}(2;c_1,c_2)$. 
Since $c_1(E)=S$, for $l\geq1$ and $m$ some integers, we can find a section $s\in\Ho^0E(-lS-mH)$ which scheme of zeros has codimension $2$.
    Since $E$ is $L-$stable, $\mathcal{O}_X(lS+mH)\hookrightarrow E$ implies $(lS+mH)\cdot L^2<\frac{S\cdot L^2}{2}$, which is equivalent to 
    \begin{equation}
        2m H\cdot L^2<-(2l-1)S\cdot L^2.
    \end{equation}

Therefore we get
$m<-(2l-1)b$. Moreover, since $s\in\Ho^0E(-lS-mH)$  has a codimension $2$ scheme of zeros, $$c_2(E(-lS-mH))= (b+(2l-1)m)SH+(m^2-b^2)H^2+l(l-1)S^2$$ is a codimension $2$ locally complete intersection. In particular, if we consider $L_k=2S+k H$ with $k\gg0$, we have $c_2(E(-lS-mH))\cdot L_k\geq0$, which implies
    \begin{equation}
    \label{codim2'}
        b+(2l-1)m+c_1'l(l-1)\geq0.
    \end{equation}
Finally, since $m<-(2l-1)b$, by (\ref{codim2'}) we get $l(l-1)(c_1'-4b)\geq0$, a contradiction. Hence $M_{L}(2;c_1,c_2)=\emptyset$.

\end{proof}

\vspace{3mm}
\noindent
{\bf Conflict of interest:} On behalf of all authors, the corresponding author states that there is no conflict of interest.

\end{document}